\theoremstyle{plain}
\newtheorem{theorem}{Theorem}[section]
\newtheorem{question}[theorem]{Question}
\newtheorem{proposition}[theorem]{Proposition}
\newtheorem{lemma}[theorem]{Lemma}
\newtheorem{remark}[theorem]{Remark}
\theoremstyle{remark}
\newtheorem*{proofofmain}{Proof of Theorem \ref{TheoremMain}}
\newcommand{\field}[1]{\mathbb{#1}}
\newcommand{\N}{\field{N}}
\def\kwadrat{\hfill$\square$}
\begin{document}
	
\title{Optimal stopping for many connected components in a graph}

\author[Micha\l\ Laso\'{n}]{Micha\l\ Laso\'{n}}

\dedicatory{\upshape 
Institute of Mathematics of the Polish Academy of Sciences,\\ ul.\'{S}niadeckich 8, 00-656 Warszawa, Poland\\ \textmtt{michalason@gmail.com}}

\keywords{optimal stopping, connected component, tree, $k$-tree}
\subjclass[2010]{60G40, 62L15, 05C57, 91A43.}

\begin{abstract}
We study a new optimal stopping problem: 
Let $G$ be a fixed graph with $n$ vertices which become active on-line in time, one by another, in a random order. The active part of $G$ is the subgraph induced by the active vertices. Find a stopping algorithm that maximizes the expected number of connected components of the active part of $G$.

We prove that if $G$ is a $k$-tree, then there is no asymptotically better algorithm than `wait until $\frac{1}{k+1}$ fraction of vertices'. The maximum expected number of connected components equals to
$$\left(\frac{k^k}{(k+1)^{k+1}}+o(1)\right)n.$$
\end{abstract}
	
\maketitle

\section{Introduction}

Optimal stopping is an area of mathematics gathering problems of choosing a right time in order to maximize an expected reward. The difficulty of these problems comes from the fact that the algorithm can choose only the currently available reward. The algorithm has to make a decision knowing only the current reward and rewards that were possible in the past, while about the future rewards it knows only the distribution. 

Optimal stopping problems emerged as direct abstractions of real-life questions, that is why they can be found in several areas of applied mathematics, statistics, economics, and mathematical finance.

Probably the most famous optimal stopping problem is the `secretary problem', see \cite{Li61,GiMo66,Fe89}. The problem has many generalizations into other combinatorial objects: orders \cite{Gn92,Mo98,Pr99,GeKuMoNi08,Ko10,FrWa10}, direct graphs \cite{Su10,KuMo05,GoKuKu13,GrMoSu15,BeSu17}, and others.

\smallskip 

Morayne and Sulkowska proposed a new natural optimal stopping problem: 
\smallskip
\newline
\emph{Vertices of an unlabelled path of length $n$ appear on-line, one by another, in a random order. At a time only the induced subgraph on vertices that came already is visible. Find a stopping algorithm that maximizes the expected number of connected components of the visible subgraph.}

\smallskip 

In this paper we consider a slight generalization of Morayne--Sulkowska model. We introduce three variants of the problem on a graph with growing amount of information a stopping algorithm gets during the game -- starting from no information (blindness), via Morayne--Sulkowska model, up to full information. As it will turn out, on $k$-trees asymptotically there is no difference between the scores in all three variants. Here we introduce the problem in full detail and necessary notions.

\smallskip

\textbf{Setting:} Let $G$ be a fixed graph with $n$ vertices which become active on-line in time, one by another, in a random order (permutation) $\sigma\in S_n$. An edge of $G$ is active if both its endpoints are active, thus the active part of $G$ is the subgraph induced by the active vertices. Denote by $CC(\sigma,t)$ the number of connected components in the active part of $G$ at a time $t$ on a permutation $\sigma$ -- that is, the number of connected components in the induced subgraph $G[\{\sigma(1),\dots,\sigma(t)\}]$.

We consider stopping algorithms -- algorithms $\mathcal{A}$ that know $G$ in advance, on a permutation $\sigma$ get some information about its active part $\mathfrak{I}(\sigma(1),\dots,\sigma(t))$ on-line in $t$, and  choose a stopping time $\mathcal{A}(\sigma)$. Equivalently, a stopping time is a function $\mathcal{A}$ such that: if $\mathfrak{I}(\sigma(1),\dots,\sigma(\mathcal{A}(\sigma)))=\mathfrak{I}(\tau(1),\dots,\tau(\mathcal{A}(\sigma)))$, then $\mathcal{A}(\tau)=\mathcal{A}(\sigma)$.

\smallskip

\textbf{Problem:} Find a stopping algorithm $\mathcal{A}$ that maximizes the expected value of the number of connected components, that is $CC(\mathcal{A})=\frac{1}{n!}\sum_{\sigma\in S_n}CC(\sigma,\mathcal{A}(\sigma))$. Find this maximum expected value $\max_{\mathcal{A}}CC(\mathcal{A})$. 

\smallskip

\textbf{Variants:} What information about the active part of $G$ algorithm gets on-line:
\begin{enumerate}
	\item[(B)] Blind -- during the game the algorithm knows only the number of vertices that came already. In particular, the algorithm does not know which vertices are active, or how many connected components are there, etc.
	\item[(PI)] Partial information -- during the game the algorithm has some partial information about the active part. E.g. if it is the current active part up to isomorphism, then it coincides with Morayne--Sulkowska model.
	\item[(FI)] Full information -- during the game the algorithm knows which vertices are active, so it knows everything that is possible. Of course, the algorithm does not know the order of vertices that are about to come. 
\end{enumerate}

\smallskip

\textbf{Results:} In Section \ref{Tree} we prove, within the realm of graphs, that for a tree $G$:
\begin{itemize}
	\item[(B)] in the Blind variant (Theorem \ref{TheoremBlind}): \newline the optimal algorithm $\mathcal{A}$ is to `wait until half of vertices', for which \newline $CC(\mathcal{A})=\frac{1}{4}n\pm 1$,
	\smallskip
	\item[(FI)] in the Full information variant  (Theorem \ref{TheoremMain}): \newline $\max_{\mathcal{A}}CC(\mathcal{A})=\left(\frac{1}{4}+o(1)\right)n$.
\end{itemize}

\smallskip

A natural generalization of trees are $k$-trees. Recall that a graph $G$ is a $k$-tree if it is maximal (w.r.t. the inclusion of edges) graph with treewidth equal to $k$. In Section \ref{kTree} we generalize the above results to $k$-trees. In particular, we reprove the case of trees, however using a more abstract approach. If $G$ is a $k$-tree, then:
\begin{itemize}
	\item[(B)] in the Blind variant (Theorem \ref{kTheoremBlind}): \newline an almost optimal algorithm $\mathcal{A}$ is to `wait until $\frac{1}{k+1}$ fraction of vertices', $CC(\mathcal{A})=\frac{k^k}{(k+1)^{k+1}}n\pm\frac{k+2}{e}$,
	\smallskip
	\item[(FI)] in the Full information variant (Theorem \ref{kTheoremMain}): \newline $\max_{\mathcal{A}}CC(\mathcal{A})=\left(\frac{k^k}{(k+1)^{k+1}}+o(1)\right)n$, \newline this upper bound holds also for all maximal $k$-degenerate graphs.
	
\end{itemize}

\smallskip

To conclude -- in all three variants of the optimal stopping problem maximizing the expected number of connected components (including Morayne--Sulkowska model) on a $k$-tree there is no asymptotically better algorithm than simply `wait until $\frac{1}{k+1}$ fraction of vertices'. This gives the maximum expected number of connected components 
$$\left(\frac{k^k}{(k+1)^{k+1}}+o(1)\right)n.$$ 
Surprisingly, for $k$-trees when only asymptotics matter, full information does not give any advantage compared to blindness. However, this is not the case for all graphs -- see Remark \ref{RemarkFI>B}.

Firstly, in Section \ref{ArbitraryGraph} we show a general framework for an optimal algorithm on a graph in the full information variant. 

\section{Arbitrary graphs}\label{ArbitraryGraph}

\subsection*{Full information variant}

It is reasonable not to stop whenever the expected gain of the next move is nonnegative. 

\begin{proposition}\label{Proposition}
Let $G$ be an arbitrary graph. Suppose in $l$-th step $G[l]$ has exactly $c$ connected components $C_1,C_2,\dots,C_c$. Denote by $N(C_i)$ the neighborhood in $G$ of the component $C_i$ . An optimal strategy does not stop whenever an inequality $n-l\geq \lvert N(C_1)\rvert+\dots+\lvert N(C_c)\rvert$ holds.
\end{proposition}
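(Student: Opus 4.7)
The plan is to compute the expected change in the number of connected components when one more vertex is revealed, show that this quantity is exactly $1 - \frac{|N(C_1)| + \dots + |N(C_c)|}{n-l}$, and then invoke the standard Bellman-style argument: if continuing one more step has nonnegative expected gain, it cannot hurt to continue (since after that step the algorithm is free to stop if it wants).

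More concretely, I would fix the configuration at step $l$ (the set of active vertices, hence $c$ and the components $C_1,\dots,C_c$) and condition on this configuration. The next vertex $\sigma(l+1)$ is uniform over the $r := n-l$ remaining vertices. For any remaining vertex $v$, let $m(v)$ denote the number of components $C_i$ with $v \in N(C_i)$; then revealing $v$ changes the component count by $1 - m(v)$ (the new vertex either forms its own component or merges $m(v)$ existing components into one via itself). Hence the conditional expected change is
\[
\mathbb{E}[CC(\sigma,l+1) - CC(\sigma,l)] \;=\; 1 - \frac{1}{r}\sum_{v \text{ remaining}} m(v).
\]
The key combinatorial observation is that every vertex of $N(C_i)$ is inactive: an active neighbor of $C_i$ would already lie in $C_i$ by definition of connected component. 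Therefore the $N(C_i)$ are subsets of the remaining vertex set, and double counting gives
\[
\sum_{v \text{ remaining}} m(v) \;=\; \sum_{i=1}^{c} |N(C_i)|.
\]
Combining, the expected one-step gain equals $1 - \frac{|N(C_1)|+\dots+|N(C_c)|}{n-l}$, which is $\geq 0$ exactly when $n-l \geq |N(C_1)|+\dots+|N(C_c)|$.

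Finally, to conclude optimality of continuing, let $V^*_l$ denote the supremum over stopping strategies of the expected final score starting from the configuration at step $l$. Since stopping is always an admissible option at step $l+1$, we have $V^*_{l+1} \geq CC(\sigma,l+1)$ pointwise, and so
\[
\mathbb{E}[V^*_{l+1}] \;\geq\; \mathbb{E}[CC(\sigma,l+1)] \;=\; CC(\sigma,l) + \mathbb{E}[\Delta CC] \;\geq\; CC(\sigma,l)
\]
whenever $n-l \geq \sum_i |N(C_i)|$. Thus continuing dominates stopping, proving the proposition. I do not expect any serious obstacle here; the only point that requires a moment's care is the identification $N(C_i)\subseteq\{$remaining vertices$\}$, which is what makes the double count collapse to $\sum_i |N(C_i)|$ without any correction terms.
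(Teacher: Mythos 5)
Your proposal is correct and follows essentially the same route as the paper: both compute the one-step expected change in the component count as $1-\frac{1}{n-l}\sum_i\lvert N(C_i)\rvert$ by counting, for each remaining vertex, how many components it neighbors (the paper organizes this via the level sets $D_j$ of your function $m(v)$, which is the same double count). Your explicit closing Bellman-style dominance argument is a slightly more careful justification of the final "it is profitable to continue" step than the paper gives, but it is not a different approach.
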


\begin{proof}
	Let $D_j$ denote the set of non active vertices that are in exactly $j$ among neighborhoods $N(C_i)$, for $j=1,\dots,c$, and let $d_j$ be their cardinalities. Then, the number $c'$ of connected components after $(l+1)$-th step increases by one $c'=c+1$ if $(l+1)$-th vertex is from the set of non active vertices and it does not belong to any neighborhood $N(C_i)$, that is, if it is from the set of cardinality
    \[
    \begin{split}
    & \;\lvert(V\setminus (C_1\cup\dots\cup C_c))\setminus (N(C_1)\cup\dots\cup N(C_c))\rvert\\
	=  &  \;\vert V\setminus (C_1\cup\dots\cup C_c)\rvert-\lvert N(C_1)\cup\dots\cup N(C_c)\rvert\\
	=  &  \;n-l-\lvert D_1\rvert-\dots-\lvert D_c\rvert=n-l-d_1-\dots-d_c.
	\end{split}
	\]
	The number $c'$ of connected components after $(l+1)$-th step decreases exactly by $j-1$ if $(l+1)$-th vertex is from the set $D_j$. Thus, the expected number of connected components after $(l+1)$-th step equals to
	\[
	\begin{split}
	\mathbb{E}(c') & =c+\frac{1}{n-l}\left((n-l-d_1-\dots-d_c)-d_2-2d_3\dots-(c-1)d_c\right)\\
	& =c+\frac{1}{n-l}\left((n-l-d_1-2d_2-\dots-cd_c\right).
	\end{split}
	\]
	Thus,
	$$\mathbb{E}(c'-c)=\frac{1}{n-l}\left(n-l-(\lvert N(C_1)\rvert+\dots+\lvert N(C_c)\rvert)\right),$$ 
	so it is profitable to make the next step if $n-l\geq\lvert N(C_1)\rvert+\dots+\lvert N(C_c)\rvert$. 
\end{proof}

It is tempting to guess that the above inequality describes an optimal algorithm. However, already on a tree it may happen that the expected gain of the next move is negative, but even though it is profitable to continue. This gives a warning that an exact description of an optimal algorithm could be out of reach.

\begin{remark}
Sometimes it is profitable to continue, even though the expected gain of the next step is negative -- that is, when the inequality from Proposition \ref{Proposition} is opposite: $n-l< \lvert N(C_1)\rvert+\dots+\lvert N(C_c)\rvert$.
\end{remark}

Consider a star with $n+1$ leafs and one path of length $n-1$ attached. Suppose that exactly all leafs of the star are active. Then, the number of vertices which are about to come is $n$, and the sum of sizes of neighborhoods is $n+1$. However, still it is profitable to continue -- consider the following continuation strategy: if the next vertex is not the center of the star then `stop', otherwise (when the center of the star came) `take $\frac{n-1}{2}$ more vertices'.
The expected value of the continuation strategy is
$$\frac{n-1}{n}\cdot 1+\frac{1}{n}\left(-(n+1)+\frac{1}{4}(n-1)\right)\simeq\frac{3}{4}>0.$$

\section{Trees}\label{Tree}

\subsection*{Blind variant}\label{Blind}

\begin{theorem}\label{TheoremBlind}
	Suppose $G$ is a tree. An optimal strategy is to stop after $l=\lfloor\frac{n+1}{2}\rfloor$ or $\lceil\frac{n+1}{2}\rceil$ vertices. The maximum expected number of connected components satisfies 
	$$\frac{1}{4}n<\max_{\mathcal{A}}CC(\mathcal{A})<\frac{1}{4}n+1.$$
\end{theorem}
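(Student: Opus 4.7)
The plan is to reduce a Blind strategy to the choice of a single integer $l$, compute the expected component count as an explicit quadratic in $l$ using that the active subgraph of a tree is a forest, and then optimize.

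First, I would observe that in the Blind variant the only information $\mathfrak{I}(\sigma(1),\dots,\sigma(t))$ available to the algorithm is the count $t$. Since this count follows the deterministic sequence $0,1,\dots,n$, the consistency condition in the definition of a stopping algorithm forces $\mathcal{A}$ to be constant on $S_n$. A Blind strategy is thus nothing but a fixed integer $l\in\{0,1,\dots,n\}$, and the task becomes maximizing $\mathbb{E}_\sigma[CC(\sigma,l)]$ over $l$.

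Second, I would use that every induced subgraph of a tree is a forest, so $CC(\sigma,l)$ equals $l$ minus the number of edges of $G$ with both endpoints in $\{\sigma(1),\dots,\sigma(l)\}$. Each of the $n-1$ edges has both endpoints in the first $l$ positions of a uniformly random $\sigma$ with probability $l(l-1)/(n(n-1))$, so by linearity of expectation
$$\mathbb{E}_\sigma[CC(\sigma,l)]=l-\frac{(n-1)\,l(l-1)}{n(n-1)}=l-\frac{l(l-1)}{n}=\frac{l(n-l+1)}{n}.$$

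Third, I would maximize the quadratic $f(l)=l(n-l+1)/n$. Its continuous maximum lies at $l=(n+1)/2$ with value $(n+1)^2/(4n)$, so the integer maximum is attained at $l=\lfloor(n+1)/2\rfloor$ or $l=\lceil(n+1)/2\rceil$ (which coincide when $n$ is odd). Plugging back yields $f(l)=(n+1)^2/(4n)=n/4+1/2+1/(4n)$ when $n$ is odd, and $f(l)=(n+2)/4=n/4+1/2$ when $n$ is even; in both cases $n/4<f(l)<n/4+1$, giving the announced bounds.

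I foresee no real obstacle: blindness collapses the algorithm to a single number, and the formula $\#\text{vertices}-\#\text{edges}$ for a forest turns the expectation into an elementary quadratic. The only care needed is the arithmetic separating the odd and even cases for the integer rounding in the final estimate.
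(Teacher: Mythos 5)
Your proposal is correct and follows essentially the same route as the paper: reduce a Blind strategy to a fixed integer $l$, use $CC=\lvert V\rvert-\lvert E\rvert$ for the induced forest together with linearity of expectation to get $\mathbb{E}[CC]=\frac{l(n-l+1)}{n}$, and optimize the quadratic at $l=\lfloor\frac{n+1}{2}\rfloor$ or $\lceil\frac{n+1}{2}\rceil$. Your explicit odd/even case check for the final bounds is slightly more detailed than the paper's, but the argument is the same.
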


\begin{proof}
	Suppose the algorithm stops after $l(n)$ steps. Notice that the stopping function $l$ depends only on $n$, since the algorithm doesn't have any other information. 
	
	For any forest $F$ we have $CC(F)=\lvert V(F)\rvert-\lvert E(F)\rvert$. Thus, for forests $G[l]$ we have $\mathbb{E}(CC[l])=\mathbb{E}(\lvert V[l]\rvert)-\mathbb{E}(\lvert E[l]\rvert)$. Clearly, $\lvert V[l]\rvert=l$, so $\mathbb{E}(\lvert V[l]\rvert)=l$.
	\[
	\begin{split}
	\mathbb{E}(\lvert E[l]\rvert) & =\sum_{e\in E(G)}\mathbb{P}(e\in G[l]) = \sum_{vw\in E(G)}\mathbb{P}(v,w\in G[l]) \\
	& = \sum_{vw\in E(G)}\frac{l(l-1)}{n(n-1)} = \frac{l(l-1)}{n}.
	\end{split}
	\]
	Hence, 
	$$\mathbb{E}(CC[l])=l-\frac{l(l-1)}{n}=\frac{l(n-l+1)}{n}.$$
	It maximizes for integer $l=\lfloor\frac{n+1}{2}\rfloor$ or $\lceil\frac{n+1}{2}\rceil$, and then $\frac{n}{4}<\mathbb{E}(\vert CC[l]\rvert)<\frac{n}{4}+1$. 
\end{proof}

\subsection*{Full information variant}

We prove that asymptotically there is no better algorithm than `wait until half of vertices'. Thus, the best result is $\left(\frac{1}{4}+o(1)\right)n$ connected components. 
In particular, when only asymptotic constant matters full information does not give any advantage to the algorithm compared to blindness.

\begin{theorem}\label{TheoremMain}
For every $\varepsilon>0$ there exists an integer $N_{\varepsilon}$ such that if $G$ is a tree with $n\geq N_{\varepsilon}$ vertices, then 
$$\max_{\mathcal{A}}CC(\mathcal{A})\leq\left(\frac{1}{4}+\varepsilon\right)n.$$
\end{theorem}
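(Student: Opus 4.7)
Since $G$ is a tree, every induced subgraph $G[S]$ is a subforest, so $CC(G[S]) = |S| - |E(G[S])|$. Writing $X_l = l - |E(G[\sigma[l]])|$, this yields $CC(\mathcal{A}) = \mathbb{E}[X_\tau]$ for any stopping algorithm $\mathcal{A}$ with stopping time $\tau$, and by the same computation as in Theorem~\ref{TheoremBlind} we have $\mathbb{E}[X_l] = l(n-l+1)/n \leq (n+1)^2/(4n) \leq \frac{n}{4} + \frac{1}{2}$.

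My first attempt is the prophet-style bound
\[
\mathbb{E}[X_\tau] \leq \mathbb{E}[\max_l X_l] \leq \max_l \mathbb{E}[X_l] + \mathbb{E}[\sup_l (X_l - \mathbb{E}[X_l])_+],
\]
and I would try to show the last term is $o(n)$ by concentration. For trees of maximum degree $\Delta$, McDiarmid's inequality applied to $X_l$ as a function of the random permutation $\sigma$ (changing one coordinate changes the active set by one vertex, altering $X_l$ by at most $O(\Delta)$) yields a sub-Gaussian deviation of order $O(\Delta \sqrt{n\log n})$, which is $o(n)$ whenever $\Delta = o(\sqrt{n/\log n})$. A union bound over $l$ then closes the argument in this bounded-degree regime.

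For trees with high-degree vertices the above is too weak; on a star, for instance, $\mathbb{E}[\sup_l (X_l - \mathbb{E}[X_l])_+] = \Theta(n)$, even though the optimal $CC(\mathcal{A})$ is only $\frac{n}{4} + O(1)$. To handle this I would choose a threshold $\Delta = \Delta(n)$ tending to infinity with $\Delta = o(\sqrt{n/\log n})$, let $H = \{v : \deg(v) > \Delta\}$ (so $|H| \leq 2(n-1)/\Delta = o(n)$ since $\sum_v \deg(v) = 2(n-1)$), and condition on the arrival times of the vertices in $H$. Given these times, the $X_l$ process restricted to the bounded-degree sub-permutation of $V \setminus H$ admits the concentration argument above, while the contribution of heavy-vertex activations to the component count is controlled deterministically in terms of $|H| = o(n)$.

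The hard part will be this last coupling step: an adaptive algorithm can in principle detect heavy-vertex arrivals (each drastically changing $X_l$, since such a vertex absorbs up to $\Delta$ components into one upon activation) and try to stop just before they happen. The star already exhibits a delicate cancellation between the gain from such a strategy and the loss from being forced to stop early when a heavy vertex activates prematurely; the challenge is to argue uniformly over all trees that this balance always keeps the adaptive advantage over the blind optimum of Theorem~\ref{TheoremBlind} at $o(n)$.
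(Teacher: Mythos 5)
Your first half is fine: the identity $CC = |S|-|E(G[S])|$, the computation $\mathbb{E}[X_l]=l(n-l+1)/n$, the prophet-style bound $\mathbb{E}[X_\tau]\leq\max_l\mathbb{E}[X_l]+\mathbb{E}[\sup_l(X_l-\mathbb{E}X_l)_+]$, and McDiarmid plus a union bound over $l$ do dispose of the regime $\Delta=o(\sqrt{n/\log n})$. (The paper's Lemma~\ref{Lemma0} does the same job via Janson's inequality, and reaches $\Delta\leq\delta_\varepsilon n$, but either tool suffices for a suitably chosen threshold.) The gap is in the second half, and it is exactly the part that carries the content of the theorem. First, the claim that ``the contribution of heavy-vertex activations to the component count is controlled deterministically in terms of $|H|=o(n)$'' is false: a heavy vertex of degree $d$ merges up to $d$ active components into one when it arrives, so it perturbs $CC$ by up to $d-1=\Theta(n)$, and the sum of heavy degrees can be $2(n-1)$ (the star). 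The relevant quantity is the total degree of $H$, not $|H|$, so you cannot absorb the heavy vertices into an $o(n)$ error term; their arrival times genuinely reshape the score process. Second, you explicitly leave open ``the hard part,'' namely showing that an adaptive algorithm cannot exploit its knowledge of whether heavy vertices have arrived to beat $n/4$ by a constant fraction. That step is not a technicality — it is where the theorem could fail and where all the work lies.

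For comparison, the paper closes this gap as follows. It first decomposes the tree by deleting $o(n)$ edges into pieces each containing at most one heavy vertex (using superadditivity $\max_{\mathcal{A}}CC_G(\mathcal{A})\leq\sum_i\max_{\mathcal{A}}CC_{H_i}(\mathcal{A})+|R|$), so only the single-heavy-vertex case must be analyzed. For a piece with one heavy vertex $v$ and all other degrees small (Lemma~\ref{Lemma1}), concentration shows that, up to $o(n)$, the score at time $\alpha n$ is the deterministic function $(\alpha-\alpha^2\beta)n-\alpha(1-\beta)n\cdot\mathbb{IF}(v\in\sigma[\alpha n])$, where $\beta n$ is the number of edges not incident to $v$. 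Since the only adaptively observable, score-relevant event is the arrival of $v$, every strategy is dominated by a two-parameter family (stop at $\alpha n$; if $v$ has arrived, stop at $\gamma n$), and the expected score of such a strategy is $\phi(\alpha,\beta,\gamma)n$ with $\phi=(1-\alpha)(\alpha-\alpha^2\beta)+\alpha\beta(\gamma-\gamma^2)$. Lemma~\ref{Lemma4} verifies $\max\phi=\frac14$: the gain from waiting longer when $v$ is late is exactly offset by the loss when $v$ arrives early. This explicit optimization is the cancellation you observed on the star but did not prove in general; without it (or a substitute), your argument does not establish the theorem.
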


We recall Janson's inequality, which will be used in the proof. 

\begin{theorem}[Janson \cite{Ja90}]\label{TheoremJanson}
Suppose a subset $R$ is drawn randomly from a finite set $V$ such that the inclusions of individual elements are independent. Let $\mathcal{A}$ be a family of subsets of $V$. For every set $A\in\mathcal{A}$ consider a random variable $X_A$ indicating whether $A\subset R$. Denote $X=\sum_{A\in\mathcal{A}}X_A$. Then, for every $0<\varepsilon\leq 1$
$$\mathbb{P}(X\leq(1-\varepsilon)\mathbb{E}X)\leq exp\left(-\frac{1}{2}\frac{(\varepsilon\mathbb{E}X)^2}{\mathbb{E}X+\sum_{(A,B):A\neq B,A\cap B\neq\emptyset}\mathbb{E}X_AX_B}\right).$$
\end{theorem}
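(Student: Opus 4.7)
Setting $\tau = \mathcal{A}(\sigma)$, the plan is to leverage the forest identity $CC(\sigma,t) = t - \lvert E[t]\rvert$ (used in the proof of Theorem \ref{TheoremBlind}), which reduces the theorem to a concentration statement about $\lvert E[\tau]\rvert$. Writing
\[
CC(\mathcal{A}) = \mathbb{E}(\tau) - \mathbb{E}(\lvert E[\tau]\rvert) = \sum_{l=0}^{n} \mathbb{P}(\tau=l)\bigl(l - \mathbb{E}(\lvert E[l]\rvert \mid \tau=l)\bigr),
\]
I note that unconditionally $\mathbb{E}(\lvert E[l]\rvert) = l(l-1)/n$, so the blind value of stopping at a fixed level $l$ is at most $l(n-l+1)/n \leq n/4+1$. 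The only way for an online algorithm to beat this asymptotically is to skew the conditional distribution of $\lvert E[l]\rvert$ downward using the full information at its disposal, and the proof reduces to showing this skew is at most $\varepsilon n$ on average.

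The central step applies Janson's inequality. Fix $l$, let $p = l/n$, and consider a random subset $R$ in which each vertex of $V(G)$ is included independently with probability $p$. Setting $X_e = \mathbf{1}[e\subseteq R]$ and $X = \sum_{e\in E(G)} X_e$, we have $\mathbb{E}X = (n-1)p^2$ and Janson's dependency parameter is $\Delta = p^3 \sum_v d_v(d_v-1)$. Theorem \ref{TheoremJanson} yields
\[
\mathbb{P}\bigl(X \leq \mathbb{E}X - \tfrac{\varepsilon n}{2}\bigr) \leq \exp\Bigl(-\frac{(\varepsilon n/2)^2}{2(\mathbb{E}X + \Delta)}\Bigr),
\]
and transferring from independent sampling to the uniform $l$-subset model via the sharp concentration of $\lvert R\rvert$ bounds $\mathbb{P}(B_l)$, where $B_l = \{\lvert E[l]\rvert < l(l-1)/n - \varepsilon n/2\}$. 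When this bound is $o(1/n^2)$, a union bound over $l \in \{0,\dots,n\}$ gives $\sum_l \mathbb{P}(B_l) = o(1/n)$, so the bad-event contribution to $CC(\mathcal{A})$ is at most $n\sum_l \mathbb{P}(B_l) = o(1)$; combined with the good-event bound $l(n-l+1)/n + \varepsilon n/2$ this yields $CC(\mathcal{A}) \leq n/4 + \varepsilon n + o(1)$.

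The main obstacle is that $\sum_v d_v(d_v-1)$ can reach $\Theta(n^2)$ when the tree has a very high-degree vertex (e.g.\ a star), in which case the Janson exponent degenerates to a constant and the concentration step fails. To handle this regime I would fix a threshold $D$ and split $V(G)$ into heavy vertices $H = \{v : \deg(v) \geq D\}$ (of which there are at most $O(n/D)$) and light vertices, condition on the positions of the vertices of $H$ within the permutation, and apply Janson to the resulting low-degree residual process. The effect of the heavy vertices on $CC$ must then be controlled directly, using the observation that activating a heavy vertex collapses many components at once, so the algorithm cannot postpone this collapse without paying a matching cost (as in the explicit star calculation in Remark \ref{RemarkFI>B} and Proposition \ref{Proposition}). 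Calibrating $D$ so that both the heavy-vertex error and the residual Janson error are simultaneously $o(n)$ is where the most care will be required.
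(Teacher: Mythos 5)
Your proposal does not prove the statement it is attached to. Theorem \ref{TheoremJanson} is Janson's inequality itself: a general lower-tail bound for $X=\sum_{A\in\mathcal{A}}X_A$ when $R\subseteq V$ is sampled with independent coordinates. It is a self-contained probabilistic concentration result with no reference to trees, permutations, stopping times, or connected components, and the paper does not prove it --- it is quoted from \cite{Ja90}. What you have written is instead an outline of how to \emph{use} Janson's inequality to bound $\max_{\mathcal{A}}CC(\mathcal{A})$ on a tree, i.e.\ roughly the route the paper takes through Lemma \ref{LemmaConcentration}, Lemma \ref{Lemma0} and Theorem \ref{TheoremMain} (including the same device of splitting off high-degree vertices that appears in the proof of Theorem \ref{TheoremMain}). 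However reasonable that plan may be for the main theorem, it takes the inequality you were asked to establish as a black box, so as a proof of Theorem \ref{TheoremJanson} it is vacuous.

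If you do want to prove the stated inequality, the argument is of a completely different character: one applies the exponential moment method to $e^{-sX}$ for $s\ge 0$, writes $-\frac{d}{ds}\log\mathbb{E}\,e^{-sX}=\sum_{A}\mathbb{E}(X_Ae^{-sX})/\mathbb{E}\,e^{-sX}$, uses a correlation inequality (FKG/Harris, exploiting that each $X_A$ and $e^{-sX}$ are respectively increasing and decreasing in the independent coordinates of $R$) together with the pairwise overlap term $\Delta=\sum_{(A,B):A\neq B,\,A\cap B\neq\emptyset}\mathbb{E}X_AX_B$ to lower-bound this derivative, integrates in $s$, and finally optimizes $s$ against the deviation $\varepsilon\mathbb{E}X$ to arrive at the exponent $-\tfrac{1}{2}(\varepsilon\mathbb{E}X)^2/(\mathbb{E}X+\Delta)$. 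None of these ingredients --- the Laplace transform, the correlation inequality handling the dependence among the $X_A$, or the optimization over $s$ --- appear anywhere in your write-up, so there is nothing in it that can be salvaged for this particular statement.
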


First we prove a concentration lemma for forests with not too large maximum degree. Notice that this assumption is essential. For a star the assertion of the lemma is false.

\begin{lemma}\label{LemmaConcentration}
For every $\varepsilon>0$ there exists $\delta_{\varepsilon}>0$ and an integer $M_{\varepsilon}$ such that if $G$ is a forest on $n\geq M_{\varepsilon}$ vertices with $\beta n$ edges and maximum degree at most $\delta_{\varepsilon} n$, then for every $\alpha\in[0,1]$ the probability that
$$CC(G[\alpha n])>(\alpha-\alpha^2\beta)n+\frac{3\varepsilon}{10}n$$
is at most $\frac{\varepsilon^3}{2000}$.
\end{lemma}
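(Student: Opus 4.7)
The plan is to rewrite the event via the forest Euler identity and then apply Janson's inequality to the number of surviving edges. Since every induced subgraph of a forest is again a forest,
\[
CC(G[\alpha n]) \;=\; \alpha n - |E(G[\alpha n])|,
\]
so the event in the lemma is exactly $|E(G[\alpha n])| < \alpha^2\beta n - \frac{3\varepsilon}{10}n$. If $\alpha^2\beta \leq \frac{3\varepsilon}{10}$ the right-hand side is non-positive and the event is empty, so one may assume $\alpha^2\beta > \frac{3\varepsilon}{10}$, which in particular forces $\alpha$ to be bounded below by a function of $\varepsilon$ only.

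To reach the independent-inclusion setting of Theorem \ref{TheoremJanson}, I would replace the uniform $\alpha n$-subset by a binomial site-percolation $R_p$ with $p := \alpha - \frac{\varepsilon}{20}$. A standard Chernoff estimate gives $\mathbb{P}(|R_p| > \alpha n) \leq \frac{\varepsilon^3}{4000}$ for $n \geq M_\varepsilon$; on the complementary event one can extend $R_p$ uniformly to an $\alpha n$-subset $R$, and monotonicity of the edge count yields $|E(G[R])| \geq |E(G[R_p])|$. Thus it suffices to prove that
\[
\mathbb{P}\bigl(|E(G[R_p])| < \alpha^2\beta n - \tfrac{3\varepsilon}{10}n\bigr) \;\leq\; \tfrac{\varepsilon^3}{4000}.
\]

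To apply Janson's inequality I would take $\mathcal{A} = E(G)$ and $X_e = \mathbf{1}[e \subseteq R_p]$, giving $\mathbb{E}X = p^2\beta n$; since any two distinct edges of a forest share at most one vertex, the pair-intersection sum equals $p^3\sum_v d_v(d_v-1)$, bounded by $2\delta_\varepsilon\beta n^2 \cdot p^3$ using the maximum-degree hypothesis. Choose $\gamma \in (0,1]$ so that $(1-\gamma)p^2\beta n$ equals the target threshold $\alpha^2\beta n - \frac{3\varepsilon}{10}n$; the assumption $\alpha^2\beta > \frac{3\varepsilon}{10}$ and the choice of $p$ keep $\gamma$ bounded away from $0$ by a function of $\varepsilon$ only. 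Janson's inequality then gives
\[
\mathbb{P}(X \leq (1-\gamma)\mathbb{E}X) \;\leq\; \exp\!\left(-\frac{\gamma^2 p^2 \beta n}{2\bigl(1 + 2\delta_\varepsilon p n\bigr)}\right) \;\leq\; \exp\!\left(-\frac{c_\varepsilon}{\delta_\varepsilon}\right)
\]
for $n \geq M_\varepsilon$ and some positive $c_\varepsilon$ depending only on $\varepsilon$; picking $\delta_\varepsilon$ so small that $\exp(-c_\varepsilon/\delta_\varepsilon) \leq \frac{\varepsilon^3}{4000}$ and combining with the Chernoff slack proves the lemma.

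The main technical hurdle lies in the boundary regime where $\alpha^2\beta$ barely exceeds $\frac{3\varepsilon}{10}$: there $\gamma$ is close to $1$ and $\mathbb{E}X$ is only of order $\varepsilon n$, so the coupling gap $\alpha - p$ and the parameter $\delta_\varepsilon$ must be balanced carefully so that Janson's exponent dominates $\log(1/\varepsilon)$ uniformly over all admissible $(\alpha,\beta)\in[0,1]^2$.
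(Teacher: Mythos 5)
Your proposal is correct and follows essentially the same route as the paper's proof: rewrite $CC(G[\alpha n])=\alpha n-|E(G[\alpha n])|$ via the forest Euler identity, couple the uniform $\alpha n$-subset with an independent binomial sample at a slightly smaller density, and apply Janson's inequality to the surviving edge count, using the maximum-degree hypothesis to control the pair-intersection term $\sum_v d_v(d_v-1)\le 2\delta_\varepsilon\beta n^2$. The only differences are cosmetic (a one-sided conditioning on $|R_p|\le\alpha n$ instead of the paper's two-sided resampling window, and a variable Janson parameter $\gamma$ in place of the paper's fixed $\varepsilon/10$), and your lower bounds $\gamma\ge\varepsilon/5$, $p\gtrsim\sqrt{\varepsilon}$, $\beta\ge 3\varepsilon/10$ in the nontrivial regime do make the exponent $c_\varepsilon/\delta_\varepsilon$ work out.
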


\begin{proof}
Let $G=(V,E)$ be a forest with $n$ vertices and $\beta n$ edges. When $\alpha<\frac{3\varepsilon}{10}$, or $\beta n<\frac{\varepsilon}{10}n$ the assertion is clear. Thus, $\alpha\geq\frac{3\varepsilon}{10}$ and $\beta n\geq\frac{\varepsilon}{10}n$.

We choose from $V$ a subset $S$ of $\alpha n$ vertices at random, i.e. such that every set of $\alpha n$ vertices is equally probable. We want to argue that the number of connected components of $G[S]$ exceeds its expected value by a positive fraction only with a small probability. 

We can make such a random choice of $S$ by a following procedure:
\begin{enumerate} 
\item drawn subset $R$ of $V$ randomly, such that each vertex is taken independently with probability $\alpha-\frac{\varepsilon}{10}$,
\item then $\mathbb{E}|R|=(\alpha-\frac{\varepsilon}{10})n$ and already from the binomial distribution it follows that the probability $\mathbb{P}\left(|R|\in\left((\alpha-2\frac{\varepsilon}{10})n,\alpha n\right)\right)$ grows to $1$ as $n$ tends to infinity, in particular for sufficiently large $n$ this probability is at least $\frac{1}{2}$,
\item if the size of $R$ is not from the interval $\left((\alpha-2\frac{\varepsilon}{10})n,\alpha n\right)$ resample $R$,
\item if the size of $R$ is from the interval $\left((\alpha-2\frac{\varepsilon}{10})n,\alpha n\right)$, add $\alpha n-|R|<2\frac{\varepsilon}{10}n$ vertices at random (such that every set is equally probable) and return $S$.  
\end{enumerate}
Inclusions of individual elements in $R$ are independent, so we may use Janson's inequality (Theorem \ref{TheoremJanson}) for $R$ and $\mathcal{A}=E$. Then, $X$ is the number of edges in $R$, $\mathbb{E}X=\beta n(\alpha-\frac{\varepsilon}{10})^2$, and
\[
\begin{split}
\mathbb{P}\left(X\leq\left(1-\frac{\varepsilon}{10}\right)\mathbb{E}X\right) &\leq exp\left(-\frac{1}{2}\frac{(\frac{\varepsilon}{10}\beta n(\alpha-\frac{\varepsilon}{10})^2)^2}{\beta n(\alpha-\frac{\varepsilon}{10})^2+2\beta n\delta_{\varepsilon}n}\right)=\\
=exp\left(-\frac{1}{2}\frac{(\frac{\varepsilon}{10})^2(\alpha-\frac{\varepsilon}{10})^4\beta n}{(\alpha-\frac{\varepsilon}{10})^2+2\delta_{\varepsilon}n}\right) &\leq exp\left(-\frac{1}{2}\frac{(\frac{\varepsilon}{10})^2(\alpha-\frac{\varepsilon}{10})^4\frac{\varepsilon}{10}n}{(\alpha-\frac{\varepsilon}{10})^2+2\delta_{\varepsilon}n}\right),
\end{split}
\]
so for sufficiently large $n$ and small $\delta_{\varepsilon}$ this probability is smaller than $\frac{\varepsilon^3}{4000}$. Hence, with probability greater than $1-\frac{\varepsilon^3}{2000}$ (the probability of failure is doubled because of point $(2)$) the number of connected components of $G[S]$ is at most 
\[
\begin{split}
CC(G[S]) & =|V(S)|-|E(S)|\leq |V(S)|-|E(R)|=\alpha n-\vert X\vert\\
& \leq\alpha n-\left(1-\frac{\varepsilon}{10}\right)\mathbb{E}X=\alpha n-\left(1-\frac{\varepsilon}{10}\right)\beta n\left(\alpha-\frac{\varepsilon}{10}\right)^2\\
& \leq(\alpha-\alpha^2\beta)n+\frac{3\varepsilon}{10}n,
\end{split}
\]
or, in other words, the number of components exceeds $(\alpha-\alpha^2\beta)n+\frac{3\varepsilon}{10}n$ only with probability at most $\frac{\varepsilon^3}{2000}$.
\end{proof}

We are ready to prove a part of the theorem for trees with not too large maximum degree.

\begin{lemma}\label{Lemma0}
For every $\varepsilon>0$ there exists an integer $M'_{\varepsilon}$ such that if $G$ is a tree with $n\geq M'_{\varepsilon}$ vertices and the maximum degree at most $\delta_{\varepsilon}n$, then 
$$\max_{\mathcal{A}}CC(\mathcal{A})\leq\left(\frac{1}{4}+\frac{8}{10}\varepsilon\right)n.$$
\end{lemma}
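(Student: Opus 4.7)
The plan is to reduce the problem to bounding the expected maximum over all stopping times, then apply Lemma \ref{LemmaConcentration} on a finite grid of times.

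\textbf{Step 1 (reduction).} For every algorithm $\mathcal{A}$ and every permutation $\sigma$, trivially
$$CC(\sigma,\mathcal{A}(\sigma))\leq \max_{0\leq l\leq n}CC(\sigma,l),$$
so $\max_{\mathcal{A}}CC(\mathcal{A})\leq \mathbb{E}_\sigma\bigl[\max_{0\leq l\leq n}CC(\sigma,l)\bigr]$, and it suffices to bound the right-hand side. This discards all information about $\mathcal{A}$, and the proof's strength hinges on how concentrated the random function $l\mapsto CC(\sigma,l)$ is around its mean.

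\textbf{Step 2 (concentration on a grid).} Since $G$ is a tree, $\beta:=|E(G)|/n=(n-1)/n$, and elementary calculus gives $(\alpha-\alpha^2\beta)n\leq n/(4\beta)=n^2/(4(n-1))\leq n/4+1$ for every $\alpha\in[0,1]$ (the maximum is attained at $\alpha=1/(2\beta)\approx 1/2$). I would fix the grid $l_k:=\lfloor k\varepsilon n/10\rfloor$ for $k=0,1,\dots,K:=\lceil 10/\varepsilon\rceil$ and apply Lemma \ref{LemmaConcentration} at each $\alpha_k:=l_k/n$. A union bound then gives, with probability at least $1-(K+1)\varepsilon^3/2000\geq 1-\varepsilon^2/100$ for $\varepsilon$ small and $n$ large enough compared to the $M_\varepsilon$ of Lemma \ref{LemmaConcentration}, the simultaneous inequality
$$CC(\sigma,l_k)\leq n/4+3\varepsilon n/10+1\quad\text{for all }k=0,\dots,K.$$

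\textbf{Step 3 (from the grid to all $l$, and expectations).} Adding a single vertex can increase $CC$ by at most one, so for any $l\in\{0,\dots,n\}$ lying in the slot $l_k\leq l<l_{k+1}$ I obtain $CC(\sigma,l)\leq CC(\sigma,l_k)+(l-l_k)\leq CC(\sigma,l_k)+\varepsilon n/10$. On the good event of Step 2 this yields $\max_l CC(\sigma,l)\leq n/4+4\varepsilon n/10+1$; on the complementary event, of probability at most $\varepsilon^2/100$, I use the trivial bound $\max_l CC(\sigma,l)\leq n$. Combining,
$$\mathbb{E}_\sigma[\max_l CC(\sigma,l)]\leq n/4+4\varepsilon n/10+1+\varepsilon^2 n/100 \leq (1/4+8\varepsilon/10)\,n$$
for all $n\geq M'_\varepsilon$ chosen large enough, which is the bound claimed in the lemma.

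The only real technical point is calibrating the grid: there must be $\Theta(1/\varepsilon)$ grid points so that the per-point failure probability $\varepsilon^3/2000$ provided by Lemma \ref{LemmaConcentration} survives the union bound with total loss $O(\varepsilon^2)$, and yet consecutive grid points must be within $\varepsilon n/10$ so that the monotone discretization error is absorbed into the $3\varepsilon n/10$ slack already built into that lemma. The constants $3\varepsilon/10$ and $\varepsilon^3/2000$ in Lemma \ref{LemmaConcentration} were clearly chosen to make exactly this trade-off feasible.
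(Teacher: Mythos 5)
Your proposal is correct and follows essentially the same route as the paper: a grid of $O(1/\varepsilon)$ time thresholds spaced $\varepsilon n/10$ apart, Lemma \ref{LemmaConcentration} applied at each grid point with a union bound, the one-component-per-new-vertex estimate to interpolate between grid points, and the trivial bound $CC\leq n$ on the exceptional event. The only (cosmetic) difference is that you pass immediately to the prophet quantity $\mathbb{E}_\sigma[\max_l CC(\sigma,l)]$, whereas the paper keeps the optimal algorithm around and rounds its stopping time down to the nearest grid point; both arguments rest on the same uniform-over-the-grid concentration statement.
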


\begin{proof}
Fix $\varepsilon>0$ and a tree $G$ with $n$ vertices and maximum degree at most $\delta_{\varepsilon}n$. $G$ satisfies the assumptions of Lemma \ref{LemmaConcentration} with $\beta=1-\frac{1}{n}$. Suppose $\mathcal{A}_{opt}$ is an optimal algorithm -- an algorithm that maximizes the expected value of the number of connected components at the stopping time. For every permutation $\sigma\in S_n$ algorithm $\mathcal{A}_{opt}$ stops at some time $\mathcal{A}_{opt}(\sigma)$. We consider threshold $\alpha n$ for every $\alpha\in\{0,\frac{\varepsilon}{10},2\frac{\varepsilon}{10},\dots,\lfloor\frac{10}{\varepsilon}\rfloor\frac{\varepsilon}{10}\}$. Let $S_\alpha$ be the set of all permutations $\sigma$ such that $\mathcal{A}_{opt}(\sigma)\in[\alpha n,(\alpha+\frac{\varepsilon}{10})n)$. Notice that for $\sigma\in S_\alpha$
$$CC(\sigma,\mathcal{A}_{opt}(\sigma))\leq CC(\sigma,\alpha n)+\frac{\varepsilon}{10}n,$$
and by Lemma \ref{LemmaConcentration} except for `odd' events of small probability it is bounded by
$$\leq\left(\alpha-\alpha^2\left(1-\frac{1}{n}\right)\right)n+\frac{3\varepsilon}{10}n+\frac{\varepsilon}{10}n\leq \frac{1}{4}n+1+\frac{4\varepsilon}{10}n,$$
since the maximum of the function $\alpha-\alpha^2$ is $\frac{1}{4}$ and it is attained for $\alpha=\frac{1}{2}$. The probability of all `odd' events (over all $\alpha$'s) is at most $(\lfloor\frac{10}{\varepsilon}\rfloor+1)\frac{\varepsilon^3}{2000}\leq\frac{2\varepsilon}{10}$ (the last inequality holds, because we can assume $\varepsilon<1$) and the maximum number of connected components is at most $n$. Hence, 
$$CC(\mathcal{A}_{opt})\leq \frac{1}{4}n+1+\frac{4\varepsilon}{10}n+\frac{2\varepsilon}{10}\cdot n\leq\left(\frac{1}{4}+\frac{8}{10}\varepsilon\right)n.$$
\end{proof}

Next we prove the theorem for `star-like' trees, that is for trees with at most one vertex of large degree. 

\begin{lemma}\label{Lemma1}
For every $\varepsilon>0$ there exists an integer $M''_{\varepsilon}$ such that if $G$ is a tree with $n\geq M''_{\varepsilon}$ vertices, a fixed vertex $v$ of arbitrary degree, and the maximum degree of remaining vertices at most $\delta_{\varepsilon}n$, then 
$$\max_{\mathcal{A}}CC(\mathcal{A})\leq\left(\frac{1}{4}+\frac{8}{10}\varepsilon\right)n.$$
\end{lemma}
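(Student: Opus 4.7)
The plan is to extend Lemma \ref{Lemma0}'s template by case-analyzing whether the unique potentially high-degree vertex $v$ lies in the active set at the optimal stopping time $\tau$. Write $d=\deg(v)$, $\gamma=d/n$, $\alpha=\tau/n$, and $p=P(v\notin S_\tau)$. The forest $G-v$ has $n-1-d$ edges and maximum degree at most $\delta_\varepsilon n$, so Lemma \ref{LemmaConcentration} applies to it; I would discretize $\alpha$ into buckets of length $\varepsilon/10$ exactly as in the proof of Lemma \ref{Lemma0}. On the event $B=\{v\in S_\tau\}$ one has the identity $CC(G[S_\tau])=CC((G-v)[S_\tau\setminus\{v\}])-k+1$, where $k$ is the number of neighbors of $v$ active at time $\tau$; Lemma \ref{LemmaConcentration} applied to $G-v$ bounds the first term from above, while a hypergeometric Chernoff bound gives $k\geq\alpha\gamma n-O(\varepsilon n)$ with high probability. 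Combining these yields the pointwise bound $CC(G[S_\tau])\leq(1-\gamma)(\alpha-\alpha^2)n+O(\varepsilon n)\leq n/4+O(\varepsilon n)$ on $B$.

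On the event $A=\{v\notin S_\tau\}$ we have $CC(G[S_\tau])=CC((G-v)[S_\tau])$, and only the weaker pointwise bound $(\alpha-\alpha^2(1-\gamma))n+O(\varepsilon n)$ is available, which may exceed $n/4$ once $\gamma>0$. The crucial observation is that $Y_t:=\mathbf{1}_{v\notin S_t}\cdot n/(n-t)$ is a martingale in the natural filtration of $\sigma$ with $Y_0=1$; optional stopping gives $E[\mathbf{1}_A\cdot n/(n-\tau)]=1$, whence Jensen's inequality applied to the convex function $1/x$ yields $E[\alpha\mid A]\leq 1-p$. A second Jensen bound, now for the concave function $\alpha\mapsto\alpha-\alpha^2(1-\gamma)$ on its increasing range, then upgrades the pointwise bound to
$$E[CC(G[S_\tau])\mid A]\leq\bigl((1-p)-(1-p)^2(1-\gamma)\bigr)n+O(\varepsilon n).$$

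Combining the two conditional bounds weighted by $p$ and $1-p$,
$$E[CC(\mathcal{A}_{opt})]\leq p\bigl((1-p)-(1-p)^2(1-\gamma)\bigr)n+(1-p)\tfrac{1-\gamma}{4}n+O(\varepsilon n),$$
and a direct calculation shows the right-hand side is maximized at $p=1/2$, where it equals exactly $n/4$ uniformly in $\gamma\in[0,1]$ (the complementary regime $1-p>1/(2(1-\gamma))$, relevant only when $\gamma<1/2$, is handled analogously and yields a strictly smaller maximum). Absorbing the error terms then yields the claimed $(1/4+8\varepsilon/10)n$ bound. The main technical obstacle is Case A: pointwise concentration alone is insufficient, and the martingale identity on $Y_t$ is what provides the global constraint $E[\tau\mid A]\leq(1-p)n$ ruling out adaptive algorithms which would consistently stop just before $v$ arrives in order to harvest the rare high-CC configurations.
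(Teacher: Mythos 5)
Your proposal is correct, and while the front end matches the paper (delete the star at $v$, apply Lemma \ref{LemmaConcentration} to the remaining forest, discretize the stopping time into $O(1/\varepsilon)$ thresholds, and union-bound the exceptional events), the back end — the reduction of the adaptive stopping problem to a finite optimization — is genuinely different. The paper converts the high-probability bound $CC_G(\sigma,\alpha n)\lesssim(\alpha-\alpha^2\beta)n-\alpha(1-\beta)n\,\mathbb{IF}(v\in\sigma[\alpha n])$ into a ``meta game'' whose only on-line information is whether $v$ has arrived, argues that every strategy is a convex combination of the two-parameter family $\mathcal{A}_{\alpha,\gamma}$ (stop at $\alpha n$; if $v$ arrives, continue to $\gamma n$), and then maximizes the resulting cubic $\phi(\alpha,\beta,\gamma)$ in Lemma \ref{Lemma4}. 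You instead bound the event $\{v\in S_\tau\}$ pointwise by $(1-\gamma)n/4$ and control the event $\{v\notin S_\tau\}$ through the optional stopping theorem for the martingale $Y_t=\mathbf{1}_{v\notin S_t}\,n/(n-t)$, which yields the constraint $\mathbb{E}[\alpha\mid v\notin S_\tau]\leq 1-p$; two applications of Jensen then reduce everything to a two-variable maximization whose value is again $\frac14$ (I checked both your main regime and the regime $1-p>\frac{1}{2(1-\gamma)}$; the latter is also $\leq\frac14$, with equality only at the shared boundary $p=\frac12$, $\gamma=0$). Your route handles arbitrary adaptive stopping times directly and avoids the paper's somewhat informal ``convex combination of $\mathcal{A}_{\alpha,\gamma}$'' step; the price is two small details you should make explicit: (i) the martingale identity degenerates at $t=n$ (if $v\notin S_{n-1}$ then $Y_{n-1}=n$ but $Y_n=0$), so you need the same truncation $\tau\leq(1-\frac{\varepsilon}{10})n$ that the paper uses for $\mathcal{A}_{opt}^{cut}$ before invoking optional stopping; and (ii) the error terms you absorb must be checked against the specific budget $\frac{8}{10}\varepsilon$ in the statement, since this constant is consumed exactly in the proof of Theorem \ref{TheoremMain}. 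Neither point is a gap in the idea.
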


\begin{proof}
Let $H$ be a forest constructed from $G$ by deleting edges incident to $v$. Notice that $H$ satisfies the assumptions of Lemma \ref{LemmaConcentration}. Let $\beta\in[0,1]$ be such that $|E(H)|=\beta n$.

Denote the neighborhood (or, the closed neighborhood together with a vertex) in $G$ of the vertex $v$ by $N$ (and $\overline{N}$ respectively). Clearly, $|N|=(1-\beta)n-1$. 

Suppose $\mathcal{A}_{opt}$ is an optimal algorithm for $G$ -- an algorithm that maximizes the expected number of connected components at the stopping time. Namely,
$$\max_{\mathcal{A}}CC(\mathcal{A})=CC(\mathcal{A}_{opt}).$$
Let $\mathcal{A}_{opt}^{cut}$ be the algorithm $\mathcal{A}_{opt}$ with an additional rule, that it stops at latest at a time $(1-\frac{\varepsilon}{10})n$. Namely, $\mathcal{A}_{opt}^{cut}(\sigma)=\min\{\mathcal{A}_{opt}(\sigma),(1-\frac{\varepsilon}{10})n\}$. Clearly,
$$CC(\mathcal{A}_{opt})\leq CC(\mathcal{A}_{opt}^{cut})+\frac{\varepsilon}{10}n.$$
For every permutation $\sigma\in S_n$ the algorithm $\mathcal{A}_{opt}^{cut}$ stops at some time $\mathcal{A}_{opt}^{cut}(\sigma)$. We consider thresholds (steps) $\alpha n$ for every $\alpha\in\{\frac{\varepsilon^2}{200},2\frac{\varepsilon^2}{200},\dots,\lfloor\frac{200}{\varepsilon^2}\rfloor\frac{\varepsilon^2}{200}\}$. Let $S_\alpha$ be the set of all permutations $\sigma$ such that $\mathcal{A}_{opt}^{cut}(\sigma)\in[(\alpha-\frac{\varepsilon^2}{200})n,\alpha n)$. Let $\mathcal{A}_{step}$ be a step-algorithm made from $\mathcal{A}_{opt}^{cut}$ -- an algorithm such that $\mathcal{A}_{step}(\sigma)=\alpha n$ for $\sigma\in S_{\alpha}$. Clearly, it is a stopping algorithm ($\mathcal{A}_{step}$ stops at a next step after $\mathcal{A}_{opt}^{cut}$ stops). 

Notice that for any permutation $\sigma\in S_\alpha$ we have
$$CC(\sigma,\mathcal{A}_{opt}^{cut}(\sigma))\leq \mathbb{E}_{\tau}CC(\tau,\alpha n)+\frac{\varepsilon}{10}n=\mathbb{E}_{\tau}CC(\tau,\mathcal{A}_{step}(\tau))+\frac{\varepsilon}{10}n,$$
where the expected value is taken over all permutations $\tau$ such that $\sigma(i)=\tau(i)$ for $i=1,\dots,\mathcal{A}_{opt}^{cut}(\sigma)$. Indeed, the sum of degrees of all the remaining vertices (not among $\sigma(1),\dots,\sigma(\mathcal{A}_{opt}^{cut}(\sigma))$) is at most $2n$. The number of all the remaining vertices is at least $\frac{\varepsilon}{10}n$. And, the number of vertices to come (from $\mathcal{A}_{opt}^{cut}(\sigma)$ to $\alpha n$) is at most $\frac{\varepsilon^2}{200}n$. Thus, in average, in the worst case, at most that many new connections between connected components appear (which is the decrease in the number of connected components)
$$2n\cdot\frac{\frac{\varepsilon^2}{200}n}{\frac{\varepsilon}{10}n}=\frac{\varepsilon}{10}n.$$
Therefore, just from the definition
$$CC(\mathcal{A}_{opt}^{cut})\leq CC(\mathcal{A}_{step})+\frac{\varepsilon}{10}n.$$

Now, for every $\sigma\in S_\alpha$
$$CC_G(\sigma,\mathcal{A}_{step}(\sigma))=CC_G(\sigma,\alpha n)=CC_H(\sigma,\alpha n)-E(\overline{N})(\sigma,\alpha n)+\frac{\varepsilon}{10}n,$$
where, by compatibility of our notion, $E(\overline{N})(\sigma,\alpha n)$ is the number of edges from $G[\overline{N}]$ between the first $\alpha n$ vertices in the permutation $\sigma$.

By a similar argument to the proof of Lemma \ref{LemmaConcentration} (but much simpler -- Janson's inequality can be replaced by Chebyshev's inequality) one can show that $\mathbb{E}|N[\alpha n]|=\alpha(1-\beta)n$, and that the probability that the inequality
$$|N[\alpha n]|\geq\alpha(1-\beta)n-\frac{\varepsilon}{10}n$$
is false, is at most $\frac{\varepsilon^3}{2000}$. 

Therefore, with high probability (we mean the ratio of permutations $\sigma$ for which it is true), at least $1-\frac{2\varepsilon}{10}$ (we exclude here over all considered thresholds (steps) $\alpha$ both `odd' events -- one from Lemma \ref{LemmaConcentration} and the above one), 
\[
\begin{split}
& \;CC_G(\sigma,\mathcal{A}_{step}(\sigma))=CC_H(\sigma,\alpha n)-E(\overline{N})(\sigma,\alpha n)\\
\leq & \;(\alpha-\alpha^2\beta)n+\frac{3\varepsilon}{10}n-(\alpha(1-\beta)n-\frac{\varepsilon}{10}n)\mathbb{IF}(v\in\sigma[\alpha n])\\
\leq & \;(\alpha-\alpha^2\beta)n-(\alpha(1-\beta)n)\mathbb{IF}(v\in\sigma[\alpha n])+\frac{4\varepsilon}{10}n,
\end{split}
\]
where $\mathbb{IF}(v\in\sigma[\alpha n])$ is a $0,1$-function indicating whether $v\in\{\sigma(1),\dots,\sigma(\alpha n)\}$.

Hence, taking into account $\frac{2\varepsilon}{10}$ remaining cases (and that $CC(\sigma,\mathcal{A}_{opt}(\sigma))\leq n$) we get an inequality
$$CC_G(\mathcal{A}_{step})\leq\max_{\mathcal{A}}\mathbb{M}_\beta(\mathcal{A})+\frac{6\varepsilon}{10}n,$$ 
where $\max_{\mathcal{A}}\mathbb{M}(\mathcal{A})$ is the maximum expected score of a stopping algorithm in the following `meta' game $\mathbb{M}_\beta$ for a fixed parameter $\beta$:

\smallskip

\textit{Let $V$ be a set with $n$ vertices, out of which $v$ is distinguished. Elements of $V$ become active on-line in time, one by another, in a random order (permutation) $\sigma\in S_n$. The score of the game after $\alpha n$ moves is given by the formula:}
$$(\alpha-\alpha^2\beta)n-(\alpha(1-\beta)n)\mathbb{IF}(v\in\sigma[\alpha n]).$$

\smallskip

Observe that the only information an algorithm gets during the game (that was not present at the begining of the game) is whether $v\in\sigma[\alpha n]$. Therefore, the maximum score a stopping algorithm can achieve in the meta game is realized by the following strategy $\mathcal{A}_{\alpha,\gamma}$ with parameters $\alpha,\gamma$:
\begin{itemize}
\item take exactly $\alpha n$ vertices,
\item if $v$ appears, then take exactly $\gamma n$ vertices in total.
\end{itemize}
The above strategies $\mathcal{A}_{\alpha,\gamma}$ are too general in a sense that when $v$ comes as $i$-th vertex, then it is already not possible to take $\gamma n$ vertices in total if $\gamma n<i$. However, when we are bounding the maximum score we are allowed enlarge the set of strategies. On the other hand, these strategies are also a bit too specific, since the number of vertices a strategy takes could depend on the time when $v$ comes. But, since the time $v$ comes does not impact the score, all other strategies are just convex combinations of $\mathcal{A}_{\alpha,\gamma}$ strategies. 

The score of the algorithm $\mathcal{A}_{\alpha,\gamma}$ is given by the formula
$$(1-\alpha)(\alpha-\alpha^2\beta)n+\alpha((\gamma-\gamma^2\beta)n-\gamma(1-\beta)n).$$
By Lemma \ref{Lemma4} we have
$$\max_{\mathcal{A}}\mathbb{M}_\beta(\mathcal{A})\leq\frac{1}{4}n.$$ 

Finally,
\[
\begin{split}
\max_{\mathcal{A}}CC(\mathcal{A})=CC(\mathcal{A}_{opt}) & \leq CC(\mathcal{A}_{opt}^{cut})+\frac{\varepsilon}{10}n\leq CC(\mathcal{A}_{step})+\frac{2\varepsilon}{10}n\\ 
& \leq\max_{\mathcal{A}}\mathbb{M}_\beta(\mathcal{A})+\frac{8\varepsilon}{10}n\leq\left(\frac{1}{4}+\frac{8}{10}\varepsilon\right)n.
\end{split}
\]
\end{proof}

\begin{lemma}\label{Lemma4}
Let $\alpha,\beta,\gamma$ be variables from $[0,1]$. The maximum of the function 
$$\phi(\alpha,\beta,\gamma)=(1-\alpha)(\alpha-\alpha^2\beta)+\alpha((\gamma-\gamma^2\beta)-\gamma(1-\beta))$$
is $\frac{1}{4}$, and it is attained exactly for triples $(\alpha,\beta,\gamma)=(\frac{1}{2},\beta,\frac{1}{2}),(\frac{1}{2},0,\gamma),(1,1,\frac{1}{2})$.
\end{lemma}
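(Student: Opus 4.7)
The plan is to optimize $\phi$ one variable at a time, starting from a simplification that isolates $\gamma$. A direct computation shows $(\gamma - \gamma^2\beta) - \gamma(1-\beta) = \gamma\beta(1-\gamma)$, hence
$$\phi(\alpha,\beta,\gamma) = \alpha(1-\alpha)(1-\alpha\beta) + \alpha\beta\,\gamma(1-\gamma).$$
Only the last summand depends on $\gamma$, and $\gamma(1-\gamma)\leq 1/4$ with equality iff $\gamma = 1/2$. So for fixed $\alpha,\beta$ with $\alpha\beta > 0$ the optimal $\gamma$ is $1/2$; if $\alpha\beta = 0$ the choice of $\gamma$ is irrelevant.

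Next I would optimize in $\beta$. Using the identity $\tfrac{1}{4} - \alpha(1-\alpha) = (\tfrac{1}{2}-\alpha)^2$, substituting $\gamma=1/2$ yields
$$\phi(\alpha,\beta,\tfrac{1}{2}) = \alpha(1-\alpha) + \alpha\beta\bigl(\tfrac{1}{2}-\alpha\bigr)^2,$$
which is linear and non-decreasing in $\beta$, with slope $\alpha(\tfrac{1}{2}-\alpha)^2$ vanishing iff $\alpha \in \{0,1/2\}$; otherwise the $\beta$-maximum is uniquely attained at $\beta = 1$. Consequently the maximum of $\phi$ is realized in one of two regimes: $\alpha = 1/2$, or $\beta = 1$.

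In the first regime $\phi(\tfrac{1}{2},\beta,\tfrac{1}{2}) = \tfrac{1}{4}$ for every $\beta$, which gives the families $(\tfrac{1}{2},\beta,\tfrac{1}{2})$ and, through the $\gamma$-freedom when $\alpha\beta=0$, also $(\tfrac{1}{2},0,\gamma)$. In the second regime, with $\alpha \neq 1/2$, we have $\phi(\alpha,1,\tfrac{1}{2}) = \alpha(1-\alpha)^2 + \alpha/4$, and the factorisation
$$\tfrac{1}{4} - \bigl[\alpha(1-\alpha)^2 + \tfrac{\alpha}{4}\bigr] = (1-\alpha)\bigl(\tfrac{1}{2} - \alpha\bigr)^2 \geq 0$$
shows $\phi \leq \tfrac{1}{4}$ with equality iff $\alpha = 1$, delivering the remaining maximizer $(1,1,\tfrac{1}{2})$.

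The argument is purely algebraic, so there is no serious obstacle; the only care required is tracking the three degenerate directions, namely the vanishing of $\alpha\beta$ (freeing $\gamma$), the vanishing of $(\tfrac{1}{2}-\alpha)^2$ (freeing $\beta$), and the vanishing of $(1-\alpha)$ at $\alpha=1$. Each of these corresponds to exactly one family in the asserted maximizer set.
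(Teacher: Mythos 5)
Your proof is correct, and after the common first step (the simplification $\phi=\alpha(1-\alpha)(1-\alpha\beta)+\alpha\beta\,\gamma(1-\gamma)$ and fixing $\gamma=\tfrac12$ when $\alpha\beta>0$) it diverges from the paper's argument. The paper treats $\phi(\alpha,\beta,\tfrac12)$ as a cubic in $\alpha$, factors its derivative as $\bigl(\alpha-\tfrac12\bigr)\bigl(3\alpha\beta-\tfrac12\beta-2\bigr)$, locates the smaller critical point at $\alpha=\tfrac12$, and compares with the endpoint values $\phi(0,\beta,\tfrac12)=0$ and $\phi(1,\beta,\tfrac12)=\tfrac{\beta}{4}$. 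You instead exploit linearity in $\beta$: the identity $\phi(\alpha,\beta,\tfrac12)=\alpha(1-\alpha)+\alpha\beta\bigl(\tfrac12-\alpha\bigr)^2$ pushes the maximum to $\beta=1$ unless the slope vanishes, and the closed-form factorisation $\tfrac14-\phi(\alpha,1,\tfrac12)=(1-\alpha)\bigl(\tfrac12-\alpha\bigr)^2$ finishes the bound without any calculus. Your route is more elementary and makes the exact set of maximizers fall out of visible nonnegative factors, whereas the paper's route requires the extra observation that the second root of the derivative lies at or beyond $\tfrac12$. One small point of precision: your slope $\alpha\bigl(\tfrac12-\alpha\bigr)^2$ also vanishes at $\alpha=0$, so strictly there is a third regime $\alpha=0$; it gives $\phi=0$ and is dismissed in one line, but it should be named alongside the regimes $\alpha=\tfrac12$ and $\beta=1$ for the case analysis to be airtight.
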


\begin{proof}
Simplify
$$\phi=(1-\alpha)(\alpha-\alpha^2\beta)+\alpha\beta(\gamma-\gamma^2).$$
The function $\gamma-\gamma^2$ maximizes for $\gamma=\frac{1}{2}$, then it attains value $\frac{1}{4}$. Thus, either $\alpha=0$ and $\phi=0$, or
$\beta=0$ and $\phi=(1-\alpha)\alpha\leq\frac{1}{4}$ attaining the maximum value for $(\frac{1}{2},0,\gamma)$, or $\gamma=\frac{1}{2}$ and
$$\phi=(1-\alpha)(\alpha-\alpha^2\beta)+\frac{1}{4}\alpha\beta=\alpha-\alpha^2\beta-\alpha^2+\alpha^3\beta+\frac{1}{4}\alpha\beta.$$
Consider $\phi$ as a polynomial of degree $3$ in $\alpha$ with positive leading coefficient $\beta$, and the derivative
$$\frac{d\phi}{d\alpha}=1-2\alpha\beta-2\alpha+3\alpha^2\beta+\frac{1}{4}\beta=\left(\alpha-\frac{1}{2}\right)\left(3\alpha\beta-\frac{1}{2}\beta-2\right).$$
Notice that $\left(3\alpha\beta-\frac{1}{2}\beta-2\right)$ for $\alpha=\frac{1}{2}$ has value $2\beta-2\leq 0$, thus the linear form $\left(3\alpha\beta-\frac{1}{2}\beta-2\right)$ has a root $\alpha\geq\frac{1}{2}$.

A polynomial of degree $3$ with positive leading coefficient attains its maximum on an interval either in the smaller root of its derivative, or at the endpoints of the interval. We have

$$\phi\left(0,\beta,\frac{1}{2}\right)=0,\:\;\;\;\phi\left(1,\beta,\frac{1}{2}\right)=\frac{1}{4}\beta,\;\:\;\;\phi\left(\frac{1}{2},\beta,\frac{1}{2}\right)=\frac{1}{4},$$
thus $\phi\leq\frac{1}{4}$ attaining the maximum value for $(\alpha,\beta,\gamma)=(\frac{1}{2},\beta,\frac{1}{2}),(1,1,\frac{1}{2})$.
 \end{proof}

\begin{proofofmain}	
Fix $\varepsilon>0$ and a tree $G$ with $n$ vertices.

Let $v_1,\dots,v_k$ be all vertices in $G$ of degree at least $\delta_{\varepsilon}\frac{\varepsilon}{10}n$. Notice that $k<\frac{20}{\varepsilon\delta_{\varepsilon}}$. For every $v_i$ consider $G$ as a rooted tree with $v_i$ being the root. For every edge $e$ incident to $v_i$, if the subtree of $G$ attached to the root by $e$ has more than $\delta_{\varepsilon}^2\frac{\varepsilon}{10}n$ vertices, add $e$ to the set $R$.
The set $R$ has at most $k\cdot\frac{10}{\varepsilon\delta_{\varepsilon}^2}<\frac{20}{\varepsilon\delta_{\varepsilon}}\cdot\frac{10}{\varepsilon\delta_{\varepsilon}^2}$ edges in total (over all $v_i$ and $e$).

Consider connected components $H_1,\dots,H_m$ of the graph obtained from $G$ by removing edges from $R$. No two vertices $v_1,\dots,v_k$ belong to the same component, so without loss of generality assume that $v_i\in H_i$. Let $H$ be a tree obtained from trees $H_{k+1},\dots,H_m$ by joining them one by another with edges attached to their leaves (in order not to increase their maximum degree). 

Consider $H$. Its maximum degree is at most $\delta_{\varepsilon}\frac{\varepsilon}{10}n$, as $H$ does not contain any of vertices $v_1,\dots,v_k$. If $\vert H\vert\geq\frac{\varepsilon}{10}n$, then $H$ satisfies the assumptions of Lemma \ref{Lemma0}, and therefore 
$$\max_{\mathcal{A}}CC_{H}(\mathcal{A})\leq\left(\frac{1}{4}+\frac{8}{10}\varepsilon\right)\vert H\vert.$$ 
Otherwise, when $\vert H\vert<\frac{\varepsilon}{10}n$, we have a bound $\max_{\mathcal{A}}CC_{H}(\mathcal{A})<\frac{\varepsilon}{10}n$. Hence, together
$$\max_{\mathcal{A}}CC_{H}(\mathcal{A})\leq\left(\frac{1}{4}+\frac{8}{10}\varepsilon\right)\vert H\vert+\frac{\varepsilon}{10}n.$$ 

Consider $H_i$ (for $i=1,\dots,k$). It contains a vertex $v_i$ of degree at least $\delta_{\varepsilon}\frac{\varepsilon}{10}n$ and degrees of the remaining vertices are at most $\delta_{\varepsilon}^2\frac{\varepsilon}{10}n$. Hence $H_i$ satisfies the assumptions of Lemma \ref{Lemma1}, and therefore 
$$\max_{\mathcal{A}}CC_{H_i}(\mathcal{A})\leq\left(\frac{1}{4}+\frac{8}{10}\varepsilon\right)\vert H_i\vert.$$ 

We have a bound
\[
\begin{split}
\max_{\mathcal{A}}CC_G(\mathcal{A}) & \leq\sum_{i=1}^k\max_{\mathcal{A}}CC_{H_i}(\mathcal{A})+\max_{\mathcal{A}}CC_{H_{k+1}\cup\dots\cup H_m}(\mathcal{A})\\
& \leq\sum_{i=1}^k\max_{\mathcal{A}}CC_{H_i}(\mathcal{A})+\max_{\mathcal{A}}CC_{H}(\mathcal{A})+\vert R\vert\\
& \leq\sum_{i=1}^k\left(\frac{1}{4}+\frac{8}{10}\varepsilon\right)\vert H_i\vert+\left(\frac{1}{4}+\frac{8}{10}\varepsilon\right)\vert H\vert+\frac{\varepsilon}{10}n+\vert R\vert\\
& =\left(\frac{1}{4}+\frac{8}{10}\varepsilon\right)n+\frac{\varepsilon}{10}n+\vert R\vert \leq\left(\frac{1}{4}+\varepsilon\right)n
\end{split}
\]
for sufficiently large $n$. 
\kwadrat
\end{proofofmain}

\section{$k$-Trees}\label{kTree}

Recall that a graph $G$ is \emph{$k$-degenerate} if every induced subgraph of $G$ has a vertex of degree at most $k$. Equivalently, the \emph{coloring number} of $G$ is at most $k+1$ -- that is, there exists an ordering of vertices of $G$ such that each vertex is joined to at most $k$ vertices that are earlier in the ordering. 

A graph $G$ is \emph{maximal $k$-degenerate} if, as its name says, it is maximal with respect to the inclusion of edges among $k$-degenerate graphs on a given vertex set. Equivalently, there exists an ordering  $v_1,\dots,v_n$ of vertices of $G$ such that the first $k$ vertices form a clique $K_{k}$, and each of the remaining vertices is joined to exactly $k$ vertices that are earlier in the ordering. This ordering is not unique, however we fix one and consider maximal $k$-degenerate graphs as equipped with such an ordering. For a vertex $v$ (not from the initial $K_k$) denote by $M_v$ the set of exactly $k$ neighbors that are earlier in the ordering, for vertices of the initial $K_k$ we set $M_{v_i}=\{v_1,\dots,v_{i-1}\}$.

A graph $G$ is \emph{$k$-tree} if it is maximal $k$-degenerate, and sets $M_v$ are cliques. The notion of a $k$-tree is inseparably connected with another well-known graph parameter -- treewidth. Namely, $k$-trees are exactly maximal (w.r.t. the inclusion of edges) graphs with treewidth equal to $k$.  

\subsection*{Blind variant}\label{kBlind}

\begin{theorem}\label{kTheoremBlind}
Suppose $G$ is a $k$-tree with $n$ vertices. An almost (up to a constant) optimal strategy is to stop after $l=\frac{1}{k+1}n$ vertices. We have
$$\frac{k^k}{(k+1)^{k+1}}n-\frac{(k+2)}{e}<CC\left[\frac{1}{k+1}n\right]\leq\max_{l}CC[l]<\frac{k^k}{(k+1)^{k+1}}n+1.$$
\end{theorem}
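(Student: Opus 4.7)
The plan is to mimic the proof of Theorem~\ref{TheoremBlind}: since the algorithm is blind, its stopping time depends only on $n$, so the problem reduces to computing $\mathbb{E}(CC[l])$ in closed form and then optimising over integer $l$. The formula $CC(F)=|V(F)|-|E(F)|$ used in the tree case is the $k=1$ specialisation of the alternating clique-count identity
\[
CC(G[S]) = \sum_{i=1}^{k+1}(-1)^{i-1} f_i(G[S]),
\]
where $f_i(\cdot)$ denotes the number of $i$-cliques; the formula is valid because every induced subgraph $G[S]$ of a $k$-tree is chordal. I would prove the identity by induction on $|S|$, removing a simplicial vertex $v$ (which exists in every chordal graph): if $v$ has $d$ neighbours in $G[S]$, the induced change on the right-hand side is $\sum_{j=0}^{d}(-1)^{j}\binom{d}{j}$, which equals $1$ when $d=0$ and $0$ otherwise---precisely the change in the number of connected components.

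The defining ordering $v_{1},\dots,v_{n}$ of the $k$-tree then yields $f_i(G)=\binom{k}{i}+(n-k)\binom{k}{i-1}$: the initial $K_{k}$ provides $\binom{k}{i}$ $i$-cliques, and each later vertex $v_{j}$ contributes $\binom{k}{i-1}$ new ones (one per $(i-1)$-subset of $M_{v_{j}}$). A fixed $i$-clique appears in $G[l]$ with probability $l^{\underline{i}}/n^{\underline{i}}$, so two applications of the Chu--Vandermonde-type identity
\[
\sum_{j=0}^{k}(-1)^{j}\binom{k}{j}\frac{x^{\underline{j}}}{y^{\underline{j}}}
=\frac{(y-x)^{\underline{k}}}{y^{\underline{k}}}
\]
(with $(x,y)=(l,n)$ for the $\binom{k}{i}$ part, and $(x,y)=(l-1,n-1)$ after pulling $l/n$ out of the $\binom{k}{i-1}$ part) collapse the expectation into the compact closed form
\[
\mathbb{E}(CC[l]) = 1 + (l-1)\,\frac{(n-l)^{\underline{k}}}{n^{\underline{k}}},
\]
which correctly specialises to $l(n-l+1)/n$ at $k=1$.

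For the upper bound I would use $\frac{n-l-i}{n-i}\leq\frac{n-l}{n}$ to conclude $\mathbb{E}(CC[l])\leq 1+l(1-l/n)^{k}$; optimising the real-variable right-hand side gives the maximum at $l=n/(k+1)$ with value $\frac{k^{k}}{(k+1)^{k+1}}n$, establishing the strict upper bound on $\max_{l} CC[l]$. For the lower bound at (the integer nearest) $l=n/(k+1)$ I would factor each ratio as $\frac{kn/(k+1)-i}{n-i}=\frac{k}{k+1}\bigl(1-\frac{i}{k(n-i)}\bigr)$ and use $\prod(1-a_{i})\geq 1-\sum a_{i}$ to bound the product below by $(k/(k+1))^{k}\bigl(1-O(1/n)\bigr)$; multiplying by $(l-1)=n/(k+1)-O(1)$ and collecting all constant-order terms (including the $O(1/n)$ correction from rounding $l$ to an integer) shows the additive gap from the asymptotic is at most the claimed $(k+2)/e$---in fact much less, since $(k/(k+1))^{k}\leq 1/2$ makes the combined constant a bounded multiple of $(3k+1)/(k+1)$.

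The main obstacle is the Euler-characteristic identity in step one; it is equivalent to the well-known collapsibility of the clique complex of a chordal graph, and the simplicial-vertex induction sketched above provides the cleanest self-contained proof. Everything afterwards is binomial bookkeeping, and the only mildly delicate step is ensuring that the integer-rounding error and the product-expansion error together still fit inside the stated additive constant.
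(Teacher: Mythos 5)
Your proposal is correct, but it reaches the closed form for $\mathbb{E}(CC[l])$ by a genuinely different route than the paper. The paper's key combinatorial input is that every connected induced subgraph $C$ of a $k$-tree has \emph{exactly one} vertex $v$ with $M_v\cap C=\emptyset$ (its minimal vertex in the construction order), so $CC(F)=\sum_v \mathbb{IF}(v\in F,\ M_v\cap F=\emptyset)$ and linearity of expectation over \emph{vertices} gives $\mathbb{P}(v\in S,\ M_v\cap S=\emptyset)=\frac{l}{n-k}\cdot\frac{(n-l)^{\underline{k}}}{n^{\underline{k}}}$ directly. You instead use the Euler characteristic of the clique complex of a chordal graph, $CC=\sum_i(-1)^{i-1}f_i$, apply linearity over \emph{cliques}, and collapse the resulting double sum with the Vandermonde-type identity; your simplicial-vertex induction for the identity is sound (the change $\sum_{j=0}^{d}(-1)^j\binom{d}{j}=[d=0]$ matches the change in $CC$ precisely because a simplicial vertex's neighbourhood is a clique), your clique counts $f_i(G)=\binom{k}{i}+(n-k)\binom{k}{i-1}$ are right, and both computations land on the same exact formula $\mathbb{E}(CC[l])=1+(l-1)\frac{(n-l)^{\underline{k}}}{n^{\underline{k}}}$, after which the optimisation and error bookkeeping are essentially identical (your Weierstrass-product lower bound in fact yields a loss of at most about $1$, comfortably inside $(k+2)/e$, with the same benign looseness about the integrality of $n/(k+1)$ that the paper itself allows). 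The trade-off: the paper's witnessing-vertex identity is the one that generalises — it degrades gracefully to an inequality ($CC\le WV$) on maximal $k$-degenerate graphs and is reused in Theorem \ref{kTheoremMain} — whereas your Euler-characteristic argument leans on chordality of induced subgraphs (hence on $M_v$ being a clique) just as essentially, buys a cleaner self-contained closed form, but would not transfer to the maximal $k$-degenerate setting. Two small points to tighten: state explicitly that the strict inequality $\max_l CC[l]<\frac{k^k}{(k+1)^{k+1}}n+1$ comes from replacing $(l-1)$ by $l$ when $0<l<n$ (with $l\in\{0,n\}$ handled directly), and carry out the final constant comparison rather than asserting it.
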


\begin{proof}
The stopping function $l$ can depend only on $n$, since the algorithm does not have any other information. Suppose the algorithm stops after $l(n)=\alpha(n)n$ steps and $F$ is the set of active vertices. 

Suppose $C$ is an induced subgraph of a $k$-tree $G$, and $C$ is connected. Observe that, since $G$ is a $k$-tree, $C$ has exactly one vertex $v$ such that $M_v\cap C=\emptyset$. Namely, $v$ has to be the least vertex of $C$ in the $k$-tree ordering. Hence, the number of connected components in an induced subgraph $F$ of $G$ equals to the number of vertices $v$ in $F$ such that $M_v\cap F=\emptyset$. For $v\in V(G)$ let $X_v$ be a random variable $X_v=\mathbb{IF}(v\in F\text{ and }M_v\cap F=\emptyset)$. We have 
\[
\begin{split}
CC(F)= & \sum_{v\in V(G)}X_v,\\
\mathbb{E}(CC[l])= & \sum_{v\in V(G)}\mathbb{E}(X_v).
\end{split}
\]
For a vertex $v$ (not from the initial $k$-clique) we have 
$$\mathbb{E}(X_v)=\frac{n-l}{n}\cdot\frac{n-l-1}{n-1}\dots\frac{n-l-k+1}{n-k+1}\cdot\frac{l}{n-k}\leq\frac{n-l}{n}\cdot\frac{n-l}{n}\dots\frac{n-l}{n}\cdot\frac{l}{n-k}=$$
$$=\left(\frac{n-\alpha n}{n}\right)^k\cdot\frac{\alpha n}{n-k}=(1-\alpha)^k\alpha\frac{n}{n-k}.$$
Altogether, taking into account that on the first $k$ vertices there is at most one connected component, we have a bound
$$\mathbb{E}(CC[l])=\sum_{v\in V(G)}\mathbb{E}(X_v)<1+(n-k)(1-\alpha)^k\alpha\frac{n}{n-k}=(1-\alpha)^k\alpha n+1.$$
The function $(1-\alpha)^k\alpha$ maximizes for $\alpha=\frac{1}{k+1}$, so
$$\mathbb{E}(CC[l])<\frac{k^k}{(k+1)^{k+1}}n+1.$$
	
Now, consider $l=\frac{1}{k+1}n$ ($\alpha=\frac{1}{k+1}$). We have
\[
\begin{split}
\mathbb{E}(X_v)= & \frac{n-l}{n}\cdot\frac{n-l-1}{n-1}\dots\frac{n-l-k+1}{n-k+1}\cdot\frac{l}{n-k}\\
\geq & \frac{n-l-k}{n-k}\cdot\frac{n-l-k}{n-k}\dots\frac{n-l-k}{n-k}\cdot\frac{l}{n-k}\\
\geq & \left(\frac{n-\alpha n}{n}\right)^k\cdot\frac{\alpha n}{n}\cdot\left(\frac{n-l-k}{n-l}\right)^k\\
= & (1-\alpha)^k\cdot\alpha\cdot\left(1-\frac{k+1}{n}\right)^k\\
\geq & (1-\alpha)^k\cdot\alpha\cdot\left(1-\frac{k(k+1)}{n}\right).
\end{split}
\]
Hence,
$$CC\left[\frac{1}{k+1}n\right]\geq (n-k)(1-\alpha)^k\cdot\alpha\cdot\left(1-\frac{k(k+1)}{n}\right)\geq$$
$$\frac{k^k}{(k+1)^{k+1}}n-\frac{(k+2)k^{k+1}}{(k+1)^{k+1}}\geq\frac{k^k}{(k+1)^{k+1}}n-\frac{(k+2)}{e}.$$
\end{proof}

\subsection*{Full information variant}

We prove that for $k$-trees there is no asymptotically better algorithm than `wait until $\frac{1}{k+1}$ fraction of vertices'. Thus, the best result for $k$-trees is $\left(\frac{k^k}{(k+1)^{k+1}}+o(1)\right)n$ connected components. 

We begin with a concentration lemma on a slightly more general structure. Let $V$ be an $n$-element vertex set. By a \emph{$k$-system} $G$ we mean a set of pairs $(v,M_v)$ such that $v\in V$, $M_v$ is a $k$-element subset of $V$, $v\notin M_v$, every $v\in V$ appears in at most one pair $(v,M_v)$ in $G$. The \emph{degree} of a vertex $v\in V$ in $G$ is the number of sets $M_w$ to which $v$ belongs. The \emph{maximum degree} of a $k$-system is the maximum degree of a vertex. Notice that if $G$ is a maximal $k$-degenerate graph, then the set of pairs 
$$\{(v,M_v):v\in V(G)\setminus K_{k}\}$$
is a $k$-system. These systems are supposed to generalize maximal $k$-degenerate graphs.

Suppose elements of the ground set $V$ of a $k$-system $G$ become active on-line in time, one by another, in a random order (permutation) $\sigma\in S_n$. A pair $(v,M_v)$ of $G$ is \emph{active} exactly when $v$ is active and vertices of $M_v$ are not active. Denote by $WV_G(\sigma,t)$ the number of active pairs of $G$ at a time $t$ on a permutation $\sigma$.

\begin{lemma}\label{kLemmaMain}
Let $k$ be a nonnegative integer. For every $\varepsilon>0$ there exist numbers $N_{k,\varepsilon},c_k,d_k$ such that if $n\geq N_{k,\varepsilon}$, then for every $k$-system $G$ there exists a set of vertices $D\subset V$ and a partition 
$$G=\bigsqcup_{T\subset D,\vert T\vert\leq k}G_T$$ 
such that for $(v,M_v)\in G_T$, $T\subset M_v$, and for every $\alpha\in[0,1]$
$$WV_G(\sigma,\alpha n)<\sum_{T\subset D,\vert T\vert\leq k}\mathbb{IF}(T\cap\sigma[\alpha n]=\emptyset)(1-\alpha)^{k-\vert T\vert}\alpha\vert G_T\vert+c_k\varepsilon n$$
with probability greater than $1-d_k\varepsilon$ (the ratio of $\sigma$'s for which the inequality holds).
\end{lemma}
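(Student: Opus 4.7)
The plan is to define $D$ as the set of high-degree vertices of the $k$-system $G$, specifically $D := \{v \in V : \deg_G(v) \ge \delta n\}$ for some $\delta = \delta(k,\varepsilon)$ to be chosen small at the end. Since the total degree satisfies $\sum_v \deg_G(v) = k\lvert G\rvert \le k n$, we have $\lvert D\rvert \le k/\delta$, a constant independent of $n$. The partition is then forced: for each pair $(v,M_v)\in G$, set $T(v,M_v) := M_v \cap D$, which lies in $D$ and has size at most $k$, and assign $(v,M_v)$ to the block $G_{T(v,M_v)}$. The required condition $T\subset M_v$ holds by construction.

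The key structural observation that makes the blocks tractable is that for distinct $(v,M_v),(w,M_w)\in G_T$ one cannot have $v\in M_w$: otherwise $v\in M_w\cap D = T\subset M_v$ would contradict $v\notin M_v$. So within $G_T$ the only way two pairs can share vertices outside $T$ is through $M_v\setminus T \subset V\setminus D$, and each vertex in $V\setminus D$ has degree less than $\delta n$ in $G$; hence every fixed pair in $G_T$ correlates with at most $O(k\delta n)$ other pairs in the same block.

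For the concentration step I fix $T\subset D$ with $\lvert T\rvert\le k$ and $\alpha$ on a grid of $[0,1]$ of step $\varepsilon$. If $T\cap \sigma[\alpha n]\neq\emptyset$, then every pair $(v,M_v)\in G_T$ satisfies $M_v\cap\sigma[\alpha n]\supset T\cap\sigma[\alpha n]\neq\emptyset$ and is therefore inactive, so $WV_{G_T}(\sigma,\alpha n)=0$ and the bound holds trivially. Otherwise, conditional on $T\cap\sigma[\alpha n]=\emptyset$, the set $\sigma[\alpha n]$ is uniform over the $\alpha n$-subsets of $V\setminus T$, and a pair $(v,M_v)\in G_T$ is active iff $v\in\sigma[\alpha n]$ and $(M_v\setminus T)\cap\sigma[\alpha n]=\emptyset$. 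Following the Bernoullization trick used in the proof of Lemma \ref{LemmaConcentration}, I replace the uniform $\alpha n$-subset by independent Bernoulli inclusions with probability $\alpha - O(\varepsilon)$, at the cost of a factor $2$ in failure probability. Under this product measure, the expected number of active pairs is $(1-\alpha)^{k-\lvert T\rvert}\alpha\lvert G_T\rvert + O(\varepsilon n)$, and the interaction bound from the previous paragraph gives variance $O(k\delta n^2)$. Chebyshev's inequality then yields an $\varepsilon n$ upper deviation with failure probability $O(k\delta/\varepsilon^2)$, which is at most $\varepsilon$ once $\delta$ is chosen small enough in terms of $k$ and $\varepsilon$.

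To conclude I union-bound over the $O(2^{k/\delta})$ choices of $T$ and the $O(1/\varepsilon)$ grid points of $\alpha$, then interpolate between consecutive grid points (both $WV_G(\sigma,\alpha n)$ and the right-hand side are $O(n)$-Lipschitz when $\alpha n$ moves by $1$). The main obstacle is the variance control for $k$-systems: unlike in Lemma \ref{LemmaConcentration}, where Janson's inequality produces a one-sided lower bound on the number of edges, here I need an upper tail bound for a sum of indicators whose underlying sets have size $k+1$ and whose correlations are of higher order. The structural fact that $v\notin M_w$ for distinct pairs in the same block is exactly what keeps the pairwise covariances manageable once the high-degree vertices are isolated in $D$; absorbing the pre-sampling slack, the expectation approximation, and the Chebyshev deviation into the additive error $c_k\varepsilon n$, and collecting all failure probabilities into $d_k\varepsilon$, yields the stated bound with constants $c_k,d_k$ depending only on $k$.
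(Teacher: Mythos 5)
Your route is genuinely different from the paper's. The paper proves the lemma by induction on $k$: it peels off the high-degree vertices one at a time (the block $G_i$ collects the not-yet-assigned pairs with $v_i\in M_v$, and deleting $v_i$ from those $M_v$'s yields a $(k-1)$-system to which the inductive hypothesis applies), and it handles the remaining bounded-degree system $H$ with Janson's inequality applied to $V\setminus R$ twice, once for $\{M_v\}$ and once for $\{\{v\}\cup M_v\}$, writing the number of active pairs as a difference $x_1-x_2$ and using an extra Markov-type step to turn Janson's lower-tail bound into the required upper-tail bound for $x_1$. Your construction is non-inductive ($D$ is all high-degree vertices at once, the partition is forced by $T=M_v\cap D$), and your concentration step is a plain second-moment bound, which is two-sided and so avoids the paper's $x_1-x_2$ contortion. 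The covariance count is the right substitute for the bounded-degree hypothesis, though your ``structural observation'' is misstated: for distinct $(v,M_v),(w,M_w)\in G_T$ the implication $v\in M_w\Rightarrow v\in M_w\cap D$ needs $v\in D$, so $v\in M_w$ is \emph{not} excluded in general. It does no harm, because in that case $v\in M_w\setminus T\subset V\setminus D$ has degree below $\delta n$, so the count of $O(k\delta n)$ correlated pairs per pair still holds; but the observation should be restated as ``every shared vertex outside $T$ lies in $V\setminus D$.''

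The genuine gap is the closing union bound. You drive the per-block, per-threshold failure probability down only to $\varepsilon$, and then union over all blocks $G_T$. The number of blocks is at most $\binom{|D|}{\le k}\le(|D|+1)^k$ with $|D|\le k/\delta$, and $\delta$ must be taken of order $\varepsilon^{3}/k$ for Chebyshev to give failure probability $\varepsilon$; so the union is over roughly $(k/\delta)^k$ events each failing with probability $\varepsilon$, and the total failure probability is of order $(k/\delta)^k\varepsilon\gg 1$, not $d_k\varepsilon$. This cannot be repaired by shrinking $\delta$: the per-block Chebyshev failure scales like $\delta/\varepsilon^2$ while the number of blocks scales like $\delta^{-k}$, so the product behaves like $\delta^{1-k}$ and does not go to zero for $k\ge 2$ (and is $\Theta(1/\varepsilon^2)$ for $k=1$). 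The same remark applies to the additive errors, which you charge at $\varepsilon n$ per block and which therefore also accumulate a factor of the number of blocks. A clean fix that stays entirely within your framework is to avoid the union over $T$ altogether: apply Chebyshev once (per threshold $\alpha$) to the centered sum $\sum_{(v,M_v)\in G}\bigl(X_v-\mathbb{E}[X_v\mid R\cap D]\bigr)$, where $X_v$ is the activity indicator of $(v,M_v)$. Since $\mathbb{E}[X_v\mid R\cap D]=\mathbb{IF}(T_v\cap R=\emptyset)\,p(1-p)^{k-|T_v|}$ is exactly the target expression for that pair, and since two centered summands are uncorrelated unless their pairs share a vertex \emph{outside} $D$, your $O(k\delta n)$ count bounds the variance of the whole sum by $O(k\delta n^2)$ and a single Chebyshev application gives the claimed inequality for all $T$ simultaneously. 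Without some such device (or the paper's inductive peeling), the proposal as written does not deliver the stated probability bound.
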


\begin{proof}
We prove the lemma by induction on $k$. 

For $k=0$ the assertion with $c_0=d_0=1$ reduces to the following statement -- if an $\alpha n$-element subset $S$ of $V$ is taken at random, then 
$$\mathbb{P}(\vert G\cap S\vert<\alpha\vert G\vert+\varepsilon n)>1-\varepsilon,$$ when $n$ is large enough. This follows easily from Chebyshev's inequality.

Suppose now that the lemma holds for all nonnegative integers less than $k$. Fix $k,\varepsilon$, and let $H$ be an arbitrary $k$-system on the ground set $V$ with the maximum degree at most $\delta|H|$. 
We want to get a bound on $WV_H(\sigma,\alpha n)$. Suppose that $\alpha\geq 2\varepsilon$, and that $\alpha n$-element subset $S$ of $V$ is chosen at random, i.e. such that every set of $\alpha n$ vertices is equally probable. Such a random choice of $S$ can be done by a following procedure:
\begin{enumerate} 
\item drawn subset $R$ of $V$ randomly, such that each vertex is taken independently with probability $\alpha-\varepsilon$,
\item already from the binomial distribution it follows that 
$$\mathbb{P}\left(|R|\in\left((\alpha-2\varepsilon)n,\alpha n\right)\right)\rightarrow_{n\rightarrow\infty}1,$$
so for sufficiently large $n$ this happens with probability at least $\frac{1}{2}$,
\item if the size of $R$ is not from the interval $\left((\alpha-2\varepsilon)n,\alpha n\right)$, resample $R$,
\item if the size of $R$ is from the interval $\left((\alpha-2\varepsilon)n,\alpha n\right)$, add $\alpha n-|R|<2\varepsilon n$ vertices at random (such that every set is equally probable) and return $S$.  
\end{enumerate}
Since inclusions of individual elements in $R$ are independent, then so are in $V\setminus R$. Thus, we may use Janson's inequality (Theorem \ref{TheoremJanson}) for a random set $V\setminus R$ twice -- once with  $\mathcal{A}_1=\{M_v:(v,M_v)\in H\}$ and secondly $\mathcal{A}_2=\{\{v\}\cup M_v:(v,M_v)\in H\}$.

In the first case, denote $X_1=\{v:M_v\subset V\setminus R\}$, and $x_1=|X_1|$. We want to get a bound on $\mathbb{P}((x_1>(1+\varepsilon^2)\mathbb{E}x_1)$. The random variable $x_1$ is nonnegative, so
$$\mathbb{P}\left(x_1>(1+\varepsilon^2)\mathbb{E}x_1\right)\cdot\varepsilon^2\mathbb{E}x_1\leq 1\cdot\varepsilon^3\mathbb{E}x_1+\mathbb{P}\left(x_1\leq\left(1-\varepsilon^3\right)\mathbb{E}x_1\right)\cdot \mathbb{E}x_1.$$ 
Hence,
\[
\begin{split}
\mathbb{P}\left(x_1>(1+\varepsilon^2)\mathbb{E}x_1\right)\leq & \;\frac{\varepsilon^3+\mathbb{P}\left(x_1\leq\left(1-\varepsilon^3\right)\mathbb{E}x_1\right)}{\varepsilon^2}\\
\leq & \;\varepsilon+\frac{1}{\varepsilon^2}exp\left(-\frac{1}{2}\frac{\varepsilon^6(\mathbb{E}x_1)^2}{\mathbb{E}x_1+2\delta(\mathbb{E}x_1)^2}\right).
\end{split}
\]
We have $\mathbb{E}x_1=(1-\alpha+\varepsilon)^k\vert H\vert$, so for sufficiently large $|H|$ and small $\delta$ the probability $\mathbb{P}((x_1>(1+\varepsilon^2)\mathbb{E}x_1)$ is smaller than $2\varepsilon$.

In the second case, denote $X_2=\{v:\{v\}\cup M_v\subset V\setminus R\}$, and $x_2=|X_2|$. We want to get a bound on $\mathbb{P}(x_2<(1-\varepsilon^2)\mathbb{E}x_2)$. $\mathbb{E}x_2=(1-\alpha+\varepsilon)^{k+1}\vert H\vert$, and
$$\mathbb{P}\left(x_2\leq\left(1-\varepsilon^2\right)\mathbb{E}x_2\right)\leq exp\left(-\frac{1}{2}\frac{\varepsilon^4(\mathbb{E}x_2)^2}{\mathbb{E}x_2+2\delta(\mathbb{E}x_2)^2}\right)$$
so for sufficiently large $|H|$ and small $\delta$ this probability is smaller than $2\varepsilon$.

Denote by $\delta_{k,\varepsilon}$ the value such that for every $\delta<\delta_{k,\varepsilon}$ and sufficiently large $|H|$ both probabilities are smaller than $2\varepsilon$. Then, we have
\[
\begin{split}
2\varepsilon+2\varepsilon & >\mathbb{P}((x_1>(1+\varepsilon^2)\mathbb{E}x_1)+\mathbb{P}(x_2<(1-\varepsilon^2)\mathbb{E}x_2)\\
& \geq\mathbb{P}((x_1>(1+\varepsilon(\alpha-\varepsilon))\mathbb{E}x_1)+\mathbb{P}(x_2<\mathbb{E}x_2-\varepsilon(\alpha-\varepsilon)\mathbb{E}x_2)\\
& \geq\mathbb{P}((x_1>(1+\varepsilon(\alpha-\varepsilon))\mathbb{E}x_1)+\mathbb{P}(x_2<\mathbb{E}x_2-\varepsilon(\alpha-\varepsilon)\mathbb{E}x_1)\\
& =\mathbb{P}((x_1-\mathbb{E}x_1)>\varepsilon(\alpha-\varepsilon)\mathbb{E}x_1)+\mathbb{P}(-(x_2-\mathbb{E}x_2)>\varepsilon(\alpha-\varepsilon)\mathbb{E}x_1)\\
& \geq\mathbb{P}((x_1-\mathbb{E}x_1)-(x_2-\mathbb{E}x_2)>2\varepsilon(\alpha-\varepsilon)\mathbb{E}x_1)\\
& =\mathbb{P}((x_1-\mathbb{E}x_1)-(x_2-\mathbb{E}x_2)>2\varepsilon(\mathbb{E}x_1-\mathbb{E}x_2))\\
& =\mathbb{P}(x_1-x_2>(1+2\varepsilon)(\mathbb{E}x_1-\mathbb{E}x_2))\\
& =\mathbb{P}(x_1-x_2>(1+2\varepsilon)((1-\alpha+\varepsilon)^{k}-(1-\alpha+\varepsilon)^{k+1})\vert H\vert)\\
& =\mathbb{P}(x_1-x_2>(1+2\varepsilon)(\alpha-\varepsilon)(1-\alpha+\varepsilon)^{k}\vert H\vert)\\
& \geq\mathbb{P}(x_1-x_2>\alpha(1-\alpha)^{k}\vert H\vert+2^k\varepsilon\vert H\vert).
\end{split}
\]

Notice that $$WV_{H}(R)=x_1-x_2,$$
therefore after adding at most $2\varepsilon n$ vertices we get a bound
$$WV_{H}(\sigma,\alpha n)<\alpha(1-\alpha)^k\vert H\vert+2^k\varepsilon\vert H\vert+2\varepsilon n\leq \alpha(1-\alpha)^k\vert H\vert+(2^k+2)\varepsilon n$$
with probability greater than $1-8\varepsilon$ (the probability of failure is doubled because of point $(2)$). Now, when $\alpha<2\varepsilon$ we have that 
$$\mathbb{P}(\vert S\vert>2\alpha n)<\varepsilon$$
when $n$ is large enough. Thus,
$$WV_{H}(\sigma,\alpha n)\leq 2\alpha n<4\varepsilon n$$
with probability greater than $1-\varepsilon$.

Let $G$ be a $k$-system. Let $v_1,\dots,v_l$ be all vertices in $G$ of degree at least $\varepsilon\delta_{k,\varepsilon}n$. Notice that $l<\frac{k}{\varepsilon\delta_{k,\varepsilon}}$. Define $k$-systems $G_i$ inductively
$$G_i=\{(v,M_v)\in G\setminus(G_1\cup\dots\cup G_{i-1}):v_i\in M_v\},$$
and $H=G\setminus(G_1\cup\dots\cup G_l)$. Clearly, $G=G_1\sqcup\dots\sqcup G_l\sqcup H$.

Notice that either $|H|\geq\varepsilon n$, so $H$ has maximum degree at most $\delta_{k,\varepsilon}|H|$, and the above bounds are satisfied for $H$. Or, otherwise $|H|<\varepsilon n$ and $WV_{H}(\sigma,\alpha n)<\varepsilon n$.

Observe that for every $G_i$ the following
$$\overline{G_i}=\{(v,M_v\setminus\{v_i\}):(v,M_v)\in G_i\}$$
is a $(k-1)$-system. Moreover,
$$WV_{G_i}(\sigma,\alpha n)=\mathbb{IF}(v_i\notin\sigma[\alpha n])WV_{\overline{G_i}}(\sigma,\alpha n).$$
Hence, from the inductive assumption we get $D_i\subset V$ and a partition 
$$\overline{G_i}=\bigsqcup_{T_i\subset D_i,\vert T_i\vert\leq k-1}\overline{G_i}_{T_i}$$ such that
$$WV_{\overline{G_i}}(\sigma,\alpha n)<\sum_{T_i\subset D_i,\vert T_i\vert\leq k-1}\mathbb{IF}(T_i\cap\sigma[\alpha n]=\emptyset)(1-\alpha)^{k-1-\vert T_i\vert}\alpha\vert \overline{G_i}_{T_i}\vert+c_{k-1}\varepsilon n$$
with probability greater than $1-d_{k-1}\varepsilon$. 

For $G$ define $D=\bigcup_{i=1}^lD_i\cup\{v_i\}$, and define $G_{\emptyset}=H$, and for $\emptyset\neq T\subset D$: $G_T\neq\emptyset$ only when $v_i\in T$ and $T_i=T\setminus\{v_i\}\subset D_i$ for some $i$, then  
$$G_T=\{(v,M_v)\in G: (v,M_v\setminus\{v_i\})\in\overline{G_i}_{T_i}\}.$$

We are ready to show the inequality from the assertion
$$WV_G(\sigma,\alpha n)=\sum_{i=1}^lWV_{G_i}(\sigma,\alpha n)+WV_{H}(\sigma,\alpha n)$$
$$=\sum_{i=1}^l\mathbb{IF}(v_i\notin\sigma[\alpha n])WV_{\overline{G_i}}(\sigma,\alpha n)+WV_{H}(\sigma,\alpha n)$$
$$<\sum_{i=1}^l\sum_{T_i\subset D_i,\vert T_i\vert\leq k-1}\mathbb{IF}(v_i\notin\sigma[\alpha n])\mathbb{IF}(T_i\cap\sigma[\alpha n]=\emptyset)(1-\alpha)^{k-1-\vert T_i\vert}\alpha\vert \overline{G_i}_{T_i}\vert+c_{k-1}\varepsilon n$$
$$+\alpha(1-\alpha)^k\vert H\vert+(2^k+2)\varepsilon n=$$
\smallskip
$$=\sum_{T\subset D,\vert T\vert\leq k}\mathbb{IF}(T\cap\sigma[\alpha n]=\emptyset)(1-\alpha)^{k-\vert T\vert}\alpha\vert G_T\vert+\left(\frac{k}{\varepsilon\delta_{k,\varepsilon}}c_{k-1}+2^k+2\right)\varepsilon n$$
with probability greater than $1-\left(\frac{k}{\varepsilon\delta_{k,\varepsilon}}d_{k-1}+8\right)\varepsilon$. We get the inductive assertion with $c_k=\frac{k}{\varepsilon\delta_{k,\varepsilon}}c_{k-1}+2^k+2$ and $d_k=\frac{k}{\varepsilon\delta_{k,\varepsilon}}d_{k-1}+8$.
\end{proof}

\begin{theorem}\label{kTheoremMain}
For every $\varepsilon>0$ there exists an integer $N_{\varepsilon}$ such that if $G$ is a maximal $k$-degenerate graph with $n\geq N_{\varepsilon}$ vertices, then 
$$\max_{\mathcal{A}}CC(\mathcal{A})\leq\left(\frac{k^k}{(k+1)^{k+1}}+\varepsilon\right)n.$$
\end{theorem}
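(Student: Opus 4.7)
\emph{Proof plan.}

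The plan is to combine Lemma~\ref{kLemmaMain} with the step-algorithm reduction already used in the tree case (Theorem~\ref{TheoremMain}), plus a single per-set optimal-stopping estimate.

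First I would pass from connected components to the $k$-system quantity $WV_G$. For the fixed maximal $k$-degenerate ordering $v_1,\dots,v_n$ of $G$ and any connected component $C$ of $G[\sigma[t]]$, the earliest-in-ordering vertex $v$ of $C$ satisfies $M_v\cap\sigma[t]=\emptyset$: any active neighbor of $v$ must lie in $C$ and hence later than $v$ in the ordering. Those $v$'s outside the initial $K_k$ are exactly active pairs of the $k$-system associated to $G$, and the initial $k$ vertices contribute at most $k$ extra components, so $CC(\sigma,t)\leq WV_G(\sigma,t)+k$. It therefore suffices to bound $\mathbb{E}\,WV_G(\sigma,\mathcal{A}(\sigma))$.

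Next, as in the proof of Theorem~\ref{TheoremMain}, I would replace $\mathcal{A}_{opt}$ by a step-algorithm $\mathcal{A}_{step}$ whose stopping times lie on a grid $\{j\eta n:0\leq j\leq 1/\eta\}$ for a small $\eta>0$ and are cut off at $(1-\eta)n$, losing only $O(\eta)n$ in expected score. Applying Lemma~\ref{kLemmaMain} with $\varepsilon_0\ll\eta/d_k$ and union-bounding over the $O(1/\eta)$ grid values of $\alpha$ then yields, with probability at least $1-O(\eta)$, a decomposition $G=\bigsqcup_T G_T$ such that simultaneously for every grid point $\alpha$,
\[
WV_G(\sigma,\alpha n)<\sum_{T\subset D,\,|T|\leq k}\mathbb{IF}(T\cap\sigma[\alpha n]=\emptyset)(1-\alpha)^{k-|T|}\alpha|G_T|+O(\eta)n.
\]
Evaluating at $\tau=\mathcal{A}_{step}(\sigma)$ and taking expectations, it remains to bound
\[
\sum_{T\subset D,\,|T|\leq k}|G_T|\cdot\mathbb{E}\bigl[\mathbb{IF}(T\cap\sigma[\tau]=\emptyset)(1-\tau/n)^{k-|T|}(\tau/n)\bigr].
\]

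The crux --- and the main obstacle I expect --- is to show that for every fixed $T$ with $|T|\leq k$ and every stopping time $\tau$ adapted to the filtration generated by $\sigma$,
\[
\mathbb{E}\bigl[\mathbb{IF}(T\cap\sigma[\tau]=\emptyset)(1-\tau/n)^{k-|T|}(\tau/n)\bigr]\leq\frac{k^k}{(k+1)^{k+1}}+o(1).
\]
For this I would set $Y_T(\alpha)=\mathbb{IF}(T\cap\sigma[\alpha n]=\emptyset)$ and $p_T(\alpha)=\mathbb{P}(Y_T(\alpha)=1)$, then verify by a direct falling-factorial computation that $M_T(\alpha):=Y_T(\alpha)/p_T(\alpha)$ is a martingale with respect to the natural filtration. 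Optional stopping gives $\mathbb{E}\,M_T(\tau)=1$, and the uniform approximation $p_T(\alpha)=(1-\alpha)^{|T|}(1+o(1))$ (valid for $|T|\leq k$ and $\alpha$ bounded away from $1$ thanks to the cut-off in $\mathcal{A}_{step}$) reduces the left side to $\mathbb{E}\bigl[M_T(\tau)(1-\tau/n)^k(\tau/n)\bigr]+o(1)\leq\max_{\alpha\in[0,1]}(1-\alpha)^k\alpha=\frac{k^k}{(k+1)^{k+1}}$. Summing over $T$ with weights $|G_T|$ and using $\sum_T|G_T|\leq n$ yields $\max_{\mathcal{A}}CC(\mathcal{A})\leq\bigl(\frac{k^k}{(k+1)^{k+1}}+\varepsilon\bigr)n$, once $\eta$ and $\varepsilon_0$ are calibrated appropriately in terms of $\varepsilon$. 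Conceptually, this martingale identity is what makes the adaptive advantage of the algorithm disappear after a single application of optional stopping; the remaining work is the routine but careful bookkeeping needed to absorb all the $O(\eta)n$ and $O(\varepsilon_0)n$ error terms into $\varepsilon n$.
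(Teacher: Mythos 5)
Your proposal is correct, and it follows the paper's skeleton up to the last step: the reduction $CC\leq WV+O(k)$ via earliest-in-ordering witnessing vertices, the cut-off at $(1-\eta)n$, the discretization to a step-algorithm with an $O(\eta)n$ loss controlled by the degree sum $2kn$, and the application of Lemma~\ref{kLemmaMain} with a union bound over the grid of thresholds are all exactly the paper's moves. Where you genuinely diverge is the final optimization over stopping times. The paper packages the surviving expression into a ``meta game'' $\mathbb{M}=\sum_T\mathbb{M}_T$ and argues combinatorially that in each $\mathbb{M}_T$ the only payoff-relevant information is whether $T$ has been hit, and that once it is hit all present and future scores vanish; hence every strategy's expected score coincides with that of a deterministic threshold $\mathcal{A}_\alpha$, whose score $(1-\alpha)^k\alpha\vert G_T\vert$ is maximized at $\alpha=\frac{1}{k+1}$. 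You instead observe that $M_T(t)=\mathbb{IF}(T\cap\sigma[t]=\emptyset)/\mathbb{P}(T\cap\sigma[t]=\emptyset)$ is a nonnegative martingale (the falling-factorial check $\mathbb{E}[Y_T(t+1)\mid\mathcal{F}_t]=Y_T(t)\frac{n-t-\vert T\vert}{n-t}=Y_T(t)\frac{p_T(t+1)}{p_T(t)}$ is correct), apply optional stopping to the bounded stopping time $\tau\leq(1-\eta)n$ to get $\mathbb{E}M_T(\tau)=1$, and bound $\mathbb{E}[M_T(\tau)(1-\tau/n)^k(\tau/n)]\leq\max_\alpha(1-\alpha)^k\alpha$ pointwise; the uniform estimate $p_T(\alpha)=(1-\alpha)^{\vert T\vert}(1+O(k/\eta n))$ for $\vert T\vert\leq k$ makes the substitution legitimate. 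Both routes give the same per-$T$ constant, but yours buys something: it bounds \emph{every} adapted stopping time in one line without having to classify or parametrize the optimal strategies of the meta game (the paper's phrase that all other strategies are ``convex combinations'' of thresholds is exactly the point your optional-stopping identity makes rigorous and automatic), and your explicit $+k$ in $CC\leq WV_G+k$ tidies up the small discrepancy between witnessing vertices of $G$ and active pairs of the associated $k$-system that the paper passes over with a ``clearly''. The remaining bookkeeping (calibrating $\eta$ and $\varepsilon_0$ against $\varepsilon$, and noting that $\vert D\vert$ and hence the number of sets $T$ is bounded in terms of $k$ and $\varepsilon$ only, so the $o(1)$ errors sum to $o(n)$ against $\sum_T\vert G_T\vert\leq n$) is routine, as you say.
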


\begin{proof}
We begin with an easy observation which changes the object we study. Suppose $C$ is a connected induced subgraph of a maximal $k$-degenerate graph $G$. Observe that $C$ has at least one vertex $v$ such that $M_v\cap C=\emptyset$. We call such a vertex a \emph{witnessing vertex} of the component. Indeed, the least vertex of $C$ in the maximal $k$-degenerate graph ordering has this property. Hence, the number of connected components in an induced subgraph $F$ of $G$ is less or equal to the number of witnessing vertices $v$ in $F$, that is $v\in F$ such that $M_v\cap F=\emptyset$. We denote by $WV(F)$ the set of all witnessing vertices in $F$. Hence,
$$\max_{\mathcal{A}}CC(\mathcal{A})\leq\max_{\mathcal{A}}WV(\mathcal{A}).$$

Suppose $\mathcal{A}_{opt}$ is an optimal algorithm for $WV$ in $G$ -- an algorithm that maximizes the expected number of witnessing vertices at the stopping time. Namely,
$$\max_{\mathcal{A}}WV(\mathcal{A})=WV(\mathcal{A}_{opt}).$$
Let $\mathcal{A}_{opt}^{cut}$ be the algorithm $\mathcal{A}_{opt}$ with an additional rule, that it stops at latest at a time $(1-\frac{\varepsilon}{10})n$. Namely, $\mathcal{A}_{opt}^{cut}(\sigma)=\min\{\mathcal{A}_{opt}(\sigma),(1-\frac{\varepsilon}{10})n\}$. Clearly,
$$WV(\mathcal{A}_{opt})\leq WV(\mathcal{A}_{opt}^{cut})+\frac{\varepsilon}{10}n.$$
For every permutation $\sigma\in S_n$ the algorithm $\mathcal{A}_{opt}^{cut}$ stops at some time $\mathcal{A}_{opt}^{cut}(\sigma)$. We consider thresholds (steps) $\alpha n$ for every $\alpha\in\{\frac{\varepsilon^2}{100k},2\frac{\varepsilon^2}{100k},\dots,\lfloor\frac{100k}{\varepsilon^2}\rfloor\frac{\varepsilon^2}{100k}\}$. Let $S_\alpha$ be the set of all permutations $\sigma$ such that $\mathcal{A}_{opt}^{cut}(\sigma)\in[(\alpha-\frac{\varepsilon^2}{100k})n,\alpha n)$. Let $\mathcal{A}_{step}$ be a step-algorithm from $\mathcal{A}_{opt}^{cut}$ -- an algorithm such that $\mathcal{A}_{step}(\sigma)=\alpha n$ for $\sigma\in S_{\alpha}$. Clearly, it is a stopping algorithm ($\mathcal{A}_{step}$ stops at a next step after a stopping algorithm $\mathcal{A}_{opt}^{cut}$ stops). 

Notice that for any permutation $\sigma\in S_\alpha$ we have
$$WV(\sigma,\mathcal{A}_{opt}^{cut}(\sigma))\leq \mathbb{E}_{\tau}WV(\tau,\alpha n)+\frac{2\varepsilon}{10}n=\mathbb{E}_{\tau}WV(\tau,\mathcal{A}_{step}(\tau))+\frac{2\varepsilon}{10}n,$$
where the expected value is taken over all permutations $\tau$ such that $\sigma(i)=\tau(i)$ for $i=1,\dots,\mathcal{A}_{opt}^{cut}(\sigma)$. Indeed, the sum of degrees of all the remaining vertices (not among $\sigma(1),\dots,\sigma(\mathcal{A}_{opt}^{cut}(\sigma))$) is at most $2kn$. The number of all the remaining vertices is at least $\frac{\varepsilon}{10}n$. And, the number of vertices to come (from $\mathcal{A}_{opt}^{cut}(\sigma)$ to $\alpha n$) is at most $\frac{\varepsilon^2}{100k}n$. Thus, in average, in the worst case, at most that many witnessing vertices $v$ disappear (because a vertex in their set $M_v$ appears)
$$2kn\cdot\frac{\frac{\varepsilon^2}{100k}n}{\frac{\varepsilon}{10}n}=\frac{2\varepsilon}{10}n.$$

Therefore, just from the definition

$$WV(\mathcal{A}_{opt}^{cut})\leq WV(\mathcal{A}_{step})+\frac{2\varepsilon}{10}n.$$

Consider $G$ as an obvious $k$-system, namely as a set of pairs $(v,M_v)$. Clearly, the number $WV_G(\sigma,\alpha n)$ means the same when $G$ is a maximal $k$-degenerate graph, and when it is a $k$-system. By Lemma \ref{kLemmaMain} applied for $\epsilon=\min\{\frac{\varepsilon^3}{1000kd_k},\frac{\varepsilon}{10c_k}\}$ there exists a set of vertices $D$ in $G$ and a partition 
$$G=\bigsqcup_{T\subset D,\vert T\vert\leq k}G_T$$ 
such that for $(v,M_v)\in G_T$, inclusion $T\subset M_v$ holds, and for every $\alpha\in[0,1]$
$$WV_G(\sigma,\alpha n)<\sum_{T\subset D,\vert T\vert\leq k}\mathbb{IF}(T\cap\sigma[\alpha n]=\emptyset)(1-\alpha)^{k-\vert T\vert}\alpha\vert G_T\vert+c_k\epsilon n$$
with probability of an `odd' event of failure at most $d_k\epsilon$. Therefore, with high probability (we mean the ratio of permutations $\sigma$ for which it is true), at least $1-\frac{\varepsilon}{10}$ (we exclude here over all considered thresholds (steps) $\alpha$ `odd' events), 
$$WV_G(\sigma,\alpha n)<\sum_{T\subset D,\vert T\vert\leq k}\mathbb{IF}(T\cap\sigma[\alpha n]=\emptyset)(1-\alpha)^{k-\vert T\vert}\alpha\vert G_T\vert+\frac{\varepsilon}{10} n.$$

The algorithm $\mathcal{A}_{step}$ is a stopping algorithm with values in our thresholds (steps). Hence, the above inequality holds for its stopping times. Taking into account $\frac{\varepsilon}{10}$ remaining cases we get an inequality
$$WV(\mathcal{A}_{step})\leq\max_{\mathcal{A}}\mathbb{M}(\mathcal{A})+\frac{2\varepsilon}{10}n,$$ 
where $\max_{\mathcal{A}}\mathbb{M}(\mathcal{A})$ is the maximum expected score of a stopping algorithm in the following `meta' game $\mathbb{M}$:

\smallskip

\textit{Elements of an $n$-element set $V$ become active on-line in time, one by another, in a random order (permutation) $\sigma\in S_n$. The score after $\alpha n$ moves is given by the formula:}
$$\sum_{T\subset D,\vert T\vert\leq k}\mathbb{IF}(T\cap\sigma[\alpha n]=\emptyset)(1-\alpha)^{k-\vert T\vert}\alpha\vert G_T\vert.$$

\smallskip

Obviously, the score in the meta game $\mathbb{M}$ is bounded from above by a sum
$$\max_{\mathcal{A}}\mathbb{M}(\mathcal{A})\leq\sum_{T\subset D,\vert T\vert\leq k}\max_{\mathcal{A}}\mathbb{M}_T(\mathcal{A}),$$ 
where $\mathbb{M}_T$ is an analogous meta game with the score after $\alpha n$ moves given by the formula:
$$\mathbb{IF}(T\cap\sigma[\alpha n]=\emptyset)(1-\alpha)^{k-\vert T\vert}\alpha\vert G_T\vert.$$

Observe that the only information a stopping algorithm gets during the $\mathbb{M}_T$ game (that was not present at the beginning of the game, and that impacts the score) is whether $T\cap\sigma[\alpha n]=\emptyset$. However, when $T\cap\sigma[\alpha n]\neq\emptyset$, then the present and future scores are $0$. Therefore, the maximum score a stopping algorithm can achieve in the meta game $\mathbb{M}_T$ is realized by the following strategy $\mathcal{A}_{\alpha}$ with parameter $\alpha$:
\begin{itemize}
\item take exactly $\alpha n$ vertices.
\end{itemize}
The score of the algorithm $\mathcal{A}_{\alpha}$ is given by the formula
$$(1-\alpha)^{\vert T\vert}(1-\alpha)^{k-\vert T\vert}\alpha\vert G_T\vert=(1-\alpha)^{k}\alpha\vert G_T\vert,$$
which maximizes for $\alpha=\frac{1}{k+1}$. So, the maximum expected score in the meta game $\mathbb{M}_T$ satisfies
$$\max_{\mathcal{A}}\mathbb{M}_T(\mathcal{A})\leq \left(\frac{k^k}{(k+1)^{k+1}}\right)\vert G_T\vert.$$
As a consequence, the score in the meta game $\mathbb{M}$ is at most 
$$\sum_{T\subset D,\vert T\vert\leq k}\left(\frac{k^k}{(k+1)^{k+1}}\right)\vert G_T\vert=\left(\frac{k^k}{(k+1)^{k+1}}\right)\vert G\vert=\left(\frac{k^k}{(k+1)^{k+1}}\right)(n-k).$$

Finally,
\[
\begin{split}
\max_{\mathcal{A}}CC(\mathcal{A})\leq & \;\max_{\mathcal{A}}WV(\mathcal{A})=WV(\mathcal{A}_{opt})\leq WV(\mathcal{A}_{opt}^{cut})+\frac{\varepsilon}{10}n\\
\leq & \;WV(\mathcal{A}_{step})+\frac{2\varepsilon}{10}n+\frac{\varepsilon}{10}n=WV(\mathcal{A}_{step})+\frac{3\varepsilon}{10}n\\
\leq & \;\max_{\mathcal{A}}\mathbb{M}(\mathcal{A})+\frac{2\varepsilon}{10}n+\frac{3\varepsilon}{10}n\leq \left(\frac{k^k}{(k+1)^{k+1}}+\frac{5}{10}\varepsilon\right)n.
\end{split}
\]
\end{proof}

We proved that for $k$-trees there is no asymptotically better algorithm than wait until some number of vertices, that is asymptotically full information does not give any advantage compared to blindness. One could wonder if this is the case for all graphs. The answer is no.

\begin{remark}\label{RemarkFI>B}
There exist families of graphs $\{G_n\}_{n\in\N}$ for which an optimal full information algorithm is asymptotically better than an optimal blind algorithm. That is,
$$\liminf_{n\rightarrow\infty}\frac{\max_{\mathcal{A}-FI\;alg}CC_{G_n}(\mathcal{A})}{\vert G_n\vert}>\limsup_{n\rightarrow\infty}\frac{\max_{\mathcal{A}-B\;alg}CC_{G_n}(\mathcal{A})}{\vert G_n\vert}.$$	
\end{remark}

A $k$-tree $G$ is called a \emph{$k$-star} if all vertices are joined to the initial clique $K_k$, that is if for all vertices $v$ (not from the initial $K_k$) $M_v=K_k$. Clearly, in a $k$-star every vertex from the initial clique $K_k$ is joined to every other vertex.

Consider a graph $G$ constructed by joining by an edge a $2$-star $S_2$ containing $\frac{999}{1000}n$ vertices with a star $S_1$ containing $\frac{1}{1000}n$ vertices. An almost optimal strategy:
\begin{itemize} 
\item take exactly $\frac{1}{3} n$ vertices, 
\item if at least one of the two vertices of the initial clique $K_2$ of $S_2$ arrives, then take exactly $\frac{1}{2}n$ vertices in total.
\end{itemize}
In the first case we want to maximize the number of connected components in $S_2$. But, when a vertex of the initial clique $K_2$ comes, then there is only one connected component in $S_2$. Then, we start maximizing the number of connected components in $S_1$. There is no blind (fixed number of vertices stopping time) strategy asymptotically as good as this one.

\section{Open problems}

We know already an asymptotically best algorithm, however we still do not know an exact optimal stopping algorithm in the full information variant.

\begin{question}
How does an optimal stopping algorithm on a fixed tree behave? 
\end{question}

It is natural to ask if we can get a better bound than in Theorem \ref{TheoremMain}.

\begin{question}
Does there exist an integer $N$ such that if $G$ is a tree on $n$ vertices, then
$$\max_{\mathcal{A}}CC_G(\mathcal{A})\leq\frac{1}{4}n+N\;?$$
\end{question}

Let $\mathcal{F}_k$ be a family of all maximal $k$-degenerate graphs. By Theorem \ref{kTheoremMain} and Theorem \ref{kTheoremBlind} we have
$$\limsup_{G\in\mathcal{F}_k,\vert G\vert\rightarrow\infty}\frac{\max_{\mathcal{A}}CC_G(\mathcal{A})}{\vert G\vert}=\frac{k^k}{(k+1)^{k+1}}.$$	
For $k$-trees we have equality $\limsup=\liminf$, but it is not hard to show a family of maximal $k$-degenerate graphs for which $\liminf$ is smaller.

\begin{question}
Determine the value
$$\liminf_{G\in\mathcal{F}_k,\vert G\vert\rightarrow\infty}\frac{\max_{\mathcal{A}}CC_G(\mathcal{A})}{\vert G\vert}.$$	
\end{question}

Another direction is to consider different classes of graphs. In our opinion it is natural to examine lattice graphs, ex. $d$-dimensional grids -- they are almost maximal $d$-degenerate, but when $d\geq 2$ their tree-width is unbounded, so they are not subgraphs of $k$-trees (for any $k$).

\begin{question}
What is the maximum expected number of connected components a stopping algorithm can guarantee on a $d$-dimensional grid?
\end{question}

It is natural to examine triangulated planar graphs -- they include $2$-dimensional grids, and are $5$-degenerate.

\begin{question}
What is the maximum expected number of connected components a stopping algorithm can guarantee on a triangulated planar graph?
\end{question}

We can also change the notion of connectivity and receive a quite different problem.

\begin{question}
Suppose edges of an $n$-clique $K_n$ become active on-line in time, one by another, in a random order. Find a stopping algorithm that maximizes the expected number of $2$-connected components in the active part.  Find this maximum expected number.
\end{question}

\section{Acknowledgements}

We thank Ma{\l}gorzata Sulkowska for stimulating discussions and helpful comments.
	


\begin{thebibliography}{9}
	
\bibitem{BeSu17} F.~Benevides, M.~Sulkowska, Percolation and best-choice problem for powers of paths, J. Appl. Probab. 54 (2017), 343-362.
\bibitem{Fe89} T.~Ferguson, Who solved the secretary problem?, Stat. Sci. 4 (1989), 282-296. 
\bibitem{FrWa10} R.~Freij, J.~W\"{a}stlund, Partially ordered secretaries, Electron. Commun. Probab. 15 (2010), 504-507.
\bibitem{GeKuMoNi08} N.~Georgiou, M.~Kuchta, M.~Morayne, J.~Niemiec, On a universal best choice algorithm for partially ordered sets, Random Struct. Algor. 32 (2008), 263-273.
\bibitem{GiMo66} J.~Gilbert, F.~Mosteller, Recognizing the maximum of a sequence, J. Am. Stat. Assoc. 61 (1966), 35-73.
\bibitem{Gn92} A.V.~Gnedin, Multicriteria extensions of the best choice problem: sequential selection without linear order. Strategies for sequential search and selection in real time (Amherst, MA, 1990), 153-172, Contemp. Math., 125, Amer. Math. Soc., Providence, RI, 1992.
\bibitem{GoKuKu13} W.~Goddard, E.~Kubicka, G.~Kubicki, An efficient algorithm for stopping on a sink in a directed graph, Oper. Res. Lett. 41 (2013), 238-240.
\bibitem{GrMoSu15} A.~Grzesik, M.~Morayne, M.~Sulkowska, From directed path to linear order -- the best choice problem for powers of directed path, SIAM J. Discrete Math. 29 (2015), 500-513.
\bibitem{Ja90} S.~Janson, Poisson approximation for large deviations, Random Struct. Algor. 1 (1990), 221-230.
\bibitem{Ko10} J.~Kozik, Dynamic threshold strategy for universal best choice problem, DMTCS Proceedings, 21st International Meeting on Probabilistic, Combinatorial, and Asymptotic Methods in the Analysis of Algorithms (2010), 439-452.
\bibitem{KuMo05} G.~Kubicki, M.~Morayne, Graph-theoretic generalization of the secretary problem: the directed path case, SIAM J. Discrete Math. 19 (2005), 622-632.
\bibitem{Li61} D.V.~Lindley, Dynamic programming and decision theory, J. Appl. Stat. 10 (1961), 39-51.
\bibitem{Mo98} M.~Morayne, Partial-order analogue of the secretary problem: The binary tree case, Discrete Math. (1998), 165-181.
\bibitem{Pr99} J.~Preater, The best-choice problem for partially ordered sets, Oper. Res. Lett. 25 (1999), 187-190.
\bibitem{Su10} M.~Sulkowska, The best choice problem for upward directed graphs, Discrete Optim. 9 (2012), 200-204.
	
\end{thebibliography}
\end{document}